\numberwithin{equation}{section}
  \newtheorem{theorem}{Theorem}[section]
  \newtheorem{lemma}[theorem]{Lemma}
  \newtheorem{corollary}[theorem]{Corollary}
  \newtheorem{remark}[theorem]{Remark}
  \newtheorem{definition}[theorem]{Definition}
\title[Geometry of null submanifolds]{On complex contact manifolds and their null submanifolds}
\author[Samuel Ssekajja]{Samuel Ssekajja*}
\newcommand{\acr}{\newline\indent}
\address{\llap{*\,} School of Mathematics\acr
 University of the Witwatersrand\acr
 Private Bag 3, Wits 2050\acr
South Africa}
\email{samuel.ssekajja@wits.ac.za, ssekajja.samuel.buwaga@aims-senegal.org}
\thanks{}
\author[Ange Maloko]{Ange Maloko**}
\address{\llap{**\,} Departement des Sciences exactes, Ecole Normale Sup\'erieure \acr
 Universit\'e Marien Ngouabi\acr
 Brazzaville, Republique du Congo}
\email{angemaloko@gmail.com, ange.malokomavanga@umng.cg}
\thanks{}
\subjclass[2010]{Primary 53C25; Secondary 53C40, 53C50}
\keywords{Null submanifolds, Quaternion submanifolds, Complex contact manifolds}
\begin{document}
\begin{abstract}
In the present paper, we study the geometry of certain classes of null submanifolds of indefinite complex contact manifolds. In particular, we show that quaternion null submanifolds are always totally geodesic. We also present the geometry of distributions on screen real and screen transversal anti-invariant submanifolds.
\end{abstract}
\maketitle
\section{Introduction}

 In the year 1996, K. L. Duggal and A. Bejancu published their  book \cite{db} on the null  geometry of submanifolds which filled an important missing part in the general theory of submanifolds. The book was later updated by K. L. Duggal and B. Sahin in \cite{ds2}, by collecting most of the new discoveries in the area since the first publiccation. In Chapter 8 of \cite{ds2}, the authors introduces the geometry of null submanifolds of indefinite quaternion Kaehler manifolds. The authors studied the geometry of real null hypersurfaces, the structure of null submanifolds, both, of indefinite quaternion Kaehler manifolds and show that a quaternion null submanifold is always totally geodesic. This result implies that the study of null submanifolds, other than quaternion null submanifolds, is interesting. Then, they  dealt with the geometry of screen real submanifolds in detail. As a generalization of real null hypersurfaces of quaternion Kaehler manifolds, they introduced $QR$-null submanifolds. Furthermaore, they show that the class of $QR$-null submanifolds does not include quaternion null submanifolds and screen real submanifolds. They also  introduced and studied the geometry of screen $QR$-null and screen $CR$-null submanifolds as generalizations of quaternion  null submanifolds and screen real submanifolds, and provided examples for each class of null submanifolds of indefinite quaternion Kaehler manifolds. Based on these two books, many researchers have investigated the geometry of null subspaces of semi-Riemannian manifolds.

 On the other hand, in about the same time as in the book \cite{db}, D. N. Kupeli \cite{kup} introduced the theory of null geometry in a relatively different way. The main tool in his approach was the consideration of a factor bundle which is isomorphic to the screen distribution used by the authors in \cite{db}. Despite all the above contributions, we remark that the null geometry of submanifolds of indefinite Sasakian 3-structure manifolds, as well as indefinite complex contact manifold have not yet been studied.

 In \cite{bla1}, the geometry of complex contact manifolds in the Riemannian sense is done in which the foundations on such manifiolds is given, from structures to their curvatures. In \cite{ssekajja}, the geometry of null real hypersurfaces of indefinite complex contact manifold was introduced and many interesting results were proved.  The main objective of this paper is to study  the geometry of $r$-null real submanifolds of indefinite complex contact manifolds. We present quaternion, screen real and screen transversal anti-invariant submanifolds. Several characterisation results are proved. The paper is arranged as follows; In Section \ref{pre}, we quote some basic notions on complex contact manifolds as well as null submanifolds needed in the rest the paper. In Section \ref{mainn}, we study quaternion submanifolds. We prove that these submanifolds are totally geodesic. In Section \ref{main22}, we introduce screen real submanifolds, and in Section \ref{main33} we study screen transversal anti-invariant submanifolds.

\section{Preliminaries} \label{pre}

Let $(\overline{M},\overline{g})$ be an $(m + n)$-dimensional semi-Riemannian manifold of constant index $\nu$, $1\le \nu\le m+n$ and $M$ be a submanifold of $\overline{M}$ of codimension $n$. We assume that both $m$ and $n$ are $\ge 1$. At a point $p\in M$, we define the orthogonal complement $T_{p} M^{\perp}$ of the tangent space $T_{p} M$ by $T_{p} M^{\perp} = \{X\in\Gamma(T_{p} \overline{M}): \overline{g}(X, Y)=0,\; \forall \, Y\in\Gamma(T_{p} M)\}
 $. We put $\mathrm{Rad} \, T_{p} M = \mathrm{Rad}\, T_{p} M^{\perp} = T_{p} M \cap T_{p} M^{\perp}$. The submanifold $M$ of $\overline{M}$ is said to be $r$-null submanifold (one supposes that the index of $\overline{M}$ is $\nu \ge r$), if the mapping $\mathrm{Rad} \, T M: p\in M \longrightarrow\mathrm{Rad}\, T_{p} M $ defines a smooth distribution on $M$ of rank $r > 0$. We call $\mathrm{Rad}\,T M$ the radical distribution on $M$. In the sequel, an $r$-null submanifold will simply be called a \textit{null submanifold} and $g$ is \textit{null metric}, unless we need to specify $r$.

 Let $S(T M)$ be a screen distribution which is a semi-Riemannian complementary distribution of $\mathrm{Rad}\,T M$ in $T M$, that is,
 \begin{align}\label{mas2}
 	TM=\mathrm{Rad}\,TM \perp S(TM).
 \end{align}
 Choose a screen transversal bundle $S(TM^\perp)$, which is semi-Riemannian and complementary to $\mathrm{Rad}\, TM$ in $TM^\perp$. Since, for any local basis $\{\xi_i \}$ of  $\mathrm{Rad}\,TM$, there exists a local null frame $\{N_i\}$ of sections with values in the orthogonal complement of $S(T M^\perp)$ in $S(T M )^\perp$  such that $g(\xi_i , N_j ) = \delta_{ij}$, it follows that there exists a null transversal vector bundle $l\mathrm{tr}(TM)$ locally spanned by $\{N_i\}$ \cite{db}. Let $\mathrm{tr}(TM)$ be complementary (but not orthogonal) vector bundle to $TM$ in $T\overline{M}$. Then,
\begin{align}
           &\mathrm{tr}(TM)=l\mathrm{tr}(TM)\perp S(TM^\perp),\label{eq08}\\
  T\overline{M}= & S(TM)\perp S(TM^\perp)\perp\{\mathrm{Rad}\, TM\oplus l\mathrm{tr}(TM)\}\label{eq04} .
\end{align}
Note that the distribution $S(TM)$ is not unique, and is canonically isomorphic to the factor vector bundle $TM/ \mathrm{Rad}\, TM$  \cite{db}. We say that a null submanifold $M$ of $\overline{M}$ is (1) $r$-null if $1\leq r< min\{m,n\}$; (2) co-isotropic if $1\leq r=n<m$,  $S(TM^\perp)=\{0\}$; (3) isotropic if $1\leq r=m<n$,  $S(TM)=\{0\}$; (4) totally lightlike if $r=n=m$,  $S(TM)=S(TM^\perp)=\{0\}$. The Gauss and Weingarten formulae are given by
\begin{align}
 \overline{\nabla}_{X}Y&=\nabla_{X}Y+h(X,Y),\;\;\forall\,X,Y\in\Gamma(TM),\nonumber\\
 \overline{\nabla}_{X}L=-&A_{L}X+\nabla_{X}^tL,\;\;\forall\,X\in\Gamma(TM),\;\;L\in\Gamma(\mathrm{tr}\,T M),\nonumber
\end{align}
where $\{\nabla_{X}Y,A_{L}X\}$ and $\{h(X,Y),\nabla_{X}^tL\}$ belong to $\Gamma(TM)$ and $\Gamma(\mathrm{tr}(T M))$ respectively. Further,  $\nabla$ and  $\nabla^{t}$ are linear connections on $M$ and $\mathrm{tr}(T M)$, respectively. The second fundamental form $h$ is a symmetric $\mathscr{F}(M)$-bilinear form on $\Gamma(T M)$ with values in $\Gamma(\mathrm{tr}(T M))$ and the shape operator $A_{L}$ is a linear endomorphism of $\Gamma(T M )$. Then we have \cite{db}
\begin{align}
 \overline{\nabla}_{X}Y&=\nabla_{X}Y+h^{l}(X,Y)+h^{s}(X,Y),\label{mas10}\\
 \overline{\nabla}_{X}N&=-A_{N}X+\nabla_{X}^{l}N+D^{s}(X,N),\label{mas11}\\
 \overline{\nabla}_{X}W&=-A_{W}X+\nabla_{X}^{s}W+D^{l}(X,W),\label{mas12}
 \end{align}
for all  $X,Y\in\Gamma(TM)$, $N\in\Gamma(l\mathrm{tr}(T M))$ and $W\in\Gamma(S(TM^{\perp}))$. Here, $A_{N}$ and $A_{w}$ are the shape operators of $M$.

Denote the projection of $TM$ on $S(TM)$ by $P$. Then, by using (\ref{mas2}), (\ref{mas10})-(\ref{mas12}) and a metric connection $\overline{\nabla}$, we obtain
 \begin{equation}\label{mas13}
  \overline{g}(h^{s}(X,Y), W) + \overline{g}(Y,D^{l}(X,W)) = g(A_{W} X,Y),
 \end{equation}
 \begin{equation}\label{mas14}
  \nabla_{X}PY = \nabla^{*}_{X}PY + h^{*}(X,PY),
 \end{equation}
 \begin{equation}\label{mas15}
  \nabla_{X}\xi =-A^{ * }_{\xi}X +\nabla^{* t}_{X} \xi,
 \end{equation}
 \begin{equation}\nonumber
  \overline{g}(h^{l}(X, PY), \xi) = g(A^{*}_{\xi}X, PY),
 \end{equation}
where $X,Y\in\Gamma(TM)$, $\xi\in\Gamma(\mathrm{Rad}\,T M)$ and $W\in\Gamma(S(TM^{\perp}))$. Here, $A^{*}_{\xi}$ is the shape operator of $S(TM)$. In general, the induced connection $\nabla$ on $M$ is not a metric connection. Since $\overline{\nabla}$ is a metric connection, by using (\ref{mas10}) we get
\begin{equation}\label{mas17}
 (\nabla_{X} g)(Y,Z) = \overline{g}(h^{l}(X,Y),Z) + \overline{g}(h^{l}(X,Z),Y),
\end{equation}
for all $X,Y\in \Gamma(TM)$. However, it is important to note that $\nabla^{*}$ is a metric connection on $S(TM)$. We will need the following result in this paper.

\begin{theorem}[\cite{ds2}]\label{th1}
	Let $M$ be an $r$-null submanifold with $r< \min\{m,n\}$ or a coisotropic submanifold of $\overline{M}$. Then the induced linear connection $\nabla$ on $M$ is a metric connection if and only if one of the following conditions is fulfilled:
	\begin{enumerate}
		\item $A_{\xi}^{*}$ vanish on $\Gamma(TM)$ for any $\xi\in \Gamma(\mathrm{Rad}\,TM)$.	
		\item  $\mathrm{Rad}\,TM$ is a Killing distribution.
		\item $\mathrm{Rad}\,TM$ is a parallel distribution with respect to $\nabla$.
\end{enumerate}
\end{theorem}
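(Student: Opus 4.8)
The plan is to show that each of the three listed conditions is equivalent to $\nabla$ being a metric connection by funnelling everything through the single identity (\ref{mas17}) and the screen shape operator $A^*_\xi$. First I would record the pairing structure coming from (\ref{eq04}): since $h^l$ takes values in $l\mathrm{tr}(TM)$, which is $\overline{g}$-orthogonal to $S(TM)$ and dual to $\mathrm{Rad}\,TM$ via $\overline{g}(N_i,\xi_j)=\delta_{ij}$, the quantity $\overline{g}(h^l(X,Y),Z)$ depends only on the radical component of $Z$. In particular, taking $Y=PY\in\Gamma(S(TM))$ and $Z=\xi$ in (\ref{mas17}) and using $\overline{g}(h^l(X,PY),\xi)=g(A^*_\xi X,PY)$ shows $(\nabla_X g)(PY,\xi)=g(A^*_\xi X,PY)$.

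\emph{Step 1 (metric $\Leftrightarrow$ (1)).} If $\nabla$ is metric, the identity just derived forces $g(A^*_\xi X,PY)=0$ for all $X\in\Gamma(TM)$ and $PY\in\Gamma(S(TM))$; since $A^*_\xi X\in\Gamma(S(TM))$ and $S(TM)$ is nondegenerate (here the standing hypothesis $r<\min\{m,n\}$ or coisotropic keeps the full decomposition in force), this gives $A^*_\xi=0$, i.e. (1). Conversely, assume (1). To obtain $(\nabla_X g)(Y,Z)=0$ I expand $Y$ and $Z$ into screen and radical parts; by $A^*_\xi=0$ and the identity above, all screen–radical cross terms vanish, leaving only the radical–radical terms $\overline{g}(h^l(X,\xi_k),\xi_i)$. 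The crucial observation is that, because $\overline{\nabla}$ is metric and $\overline{g}(\xi_i,\xi_k)=0$, differentiating yields the antisymmetry $\overline{g}(h^l(X,\xi_k),\xi_i)+\overline{g}(h^l(X,\xi_i),\xi_k)=0$; contracting this antisymmetric array against the symmetric coefficient produced by (\ref{mas17}) gives zero, so $\nabla$ is metric.

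\emph{Step 2 ((1) $\Leftrightarrow$ (3))} is immediate from (\ref{mas15}): writing $\nabla_X\xi=-A^*_\xi X+\nabla^{*t}_X\xi$ with $A^*_\xi X\in\Gamma(S(TM))$ and $\nabla^{*t}_X\xi\in\Gamma(\mathrm{Rad}\,TM)$, the field $\nabla_X\xi$ lies in $\mathrm{Rad}\,TM$ for every $X$ precisely when its screen part $-A^*_\xi X$ vanishes identically, i.e. when $A^*_\xi=0$. \emph{Step 3 ((1) $\Leftrightarrow$ (2)).} Using that $\nabla$ is torsion-free I would rewrite $(\mathcal{L}_\xi g)(X,Y)=(\nabla_\xi g)(X,Y)+g(\nabla_X\xi,Y)+g(\nabla_Y\xi,X)$ and apply (\ref{mas15}) with $\nabla^{*t}_X\xi\in\Gamma(\mathrm{Rad}\,TM)$ to get $g(\nabla_X\xi,Y)=-g(A^*_\xi X,Y)$. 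Restricting $X,Y$ to $S(TM)$ kills the $h^l$-contribution in $(\nabla_\xi g)$ and, via the self-adjointness of $A^*_\xi$ on $S(TM)$ (a consequence of the symmetry of $h^l$ together with $\overline{g}(h^l(X,PY),\xi)=g(A^*_\xi X,PY)$), reduces the Killing condition to $g(A^*_\xi X,Y)=0$ on $S(TM)$, whence $A^*_\xi=0$; the converse is a direct substitution.

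The step I expect to be the main obstacle is the converse in Step 1: controlling the radical–radical terms $\overline{g}(h^l(X,\xi_k),\xi_i)$, which need not vanish individually and cancel only after the symmetric/antisymmetric contraction. Everything else reduces to bookkeeping with (\ref{mas15}), the self-adjointness of $A^*_\xi$, and the nondegeneracy of $S(TM)$.
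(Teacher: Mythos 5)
There is nothing in the paper to compare your argument against: Theorem \ref{th1} is imported from \cite{ds2} with a citation and no proof, and is used downstream as a black box. Judged on its own, your proposal is essentially the standard argument from that reference, and its skeleton is sound. The reduction of $(\nabla_X g)(Y,Z)$ through (\ref{mas17}) to the single pairing $g(A^*_\xi X,PY)$, the observation that the radical--radical terms cancel because metricity of $\overline{\nabla}$ and $\overline{g}(\xi_i,\xi_k)=0$ force $\overline{g}(h^l(X,\xi_i),\xi_k)=-\overline{g}(h^l(X,\xi_k),\xi_i)$ (so the symmetric sum in (\ref{mas17}) vanishes identically there), and the identification of parallelism of $\mathrm{Rad}\,TM$ with $A^*_\xi=0$ via the decomposition in (\ref{mas15}) are all correct; the nondegeneracy of $S(TM)$, guaranteed by the standing hypothesis, is used exactly where it should be.

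One loose end sits in your Step 3. Condition (1) asks that $A^*_\xi$ vanish on all of $\Gamma(TM)$, but restricting the Killing identity to $X,Y\in\Gamma(S(TM))$ only yields $A^*_\xi|_{S(TM)}=0$; you still owe the vanishing of $A^*_\xi\eta$ for $\eta\in\Gamma(\mathrm{Rad}\,TM)$, which is what the metricity criterion $(\nabla_\eta g)(PY,\xi)=g(A^*_\xi\eta,PY)$ actually consumes when the differentiation direction is radical. The fix is the mixed evaluation: using the symmetry of $h^l$, the identity $\overline{g}(h^l(\eta,PX),\xi)=g(A^*_\xi\eta,PX)$, and $\overline{g}(PX,\eta)=0$, one finds $(\mathcal{L}_\xi g)(PX,\eta)=-2\,g(A^*_\xi\eta,PX)$, while the radical--radical components of $\mathcal{L}_\xi g$ vanish automatically by the same antisymmetry you used in Step 1. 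With that supplement the chain of equivalences closes, so the issue is an omission of a case rather than a wrong idea.
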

A {\it complex contact manifold} is a complex manifold, $\overline{M}$, of odd complex dimension $(2n+1)$ together with an open covering $\{\mathcal{O}_{i}\}$ by coordinate neighbourhoods such that: (1) On each $\mathcal{O}_{i}$ there is a holomorphic 1-form $\theta_{i}$ such that $\theta_{i}\wedge (d\theta_{i})^{n}\ne 0$. (2)  On $\mathcal{O}_{i}\cup \mathcal{O}_{j} \ne \emptyset$ there is a non-vanishing holomorphic function $f_{\alpha\beta}$ such that $\theta_{i}=f_{\ij}\theta_{j}$ (see \cite{bla1,bla2} for more details). Furthermore, the subspaces $\{X\in T_{m}\mathcal{O}_{i}: \theta_{i}(X)=0\}$ defines a non-integrable holomorphic subbundle $\mathcal{H}$ of complex dimension $2n$ called the {\it complex contact subbundle} or {\it horizontal subbundle}. The quotient $T\overline{M}/\mathcal{H}$ is a complex line bundle over $\overline{M}$ \cite[p. 49]{bla2}. Some well-known examples of complex contact metric manifolds include the complex Heisenberg group $H_{\mathbb{C}}$ and the odd-dimensional complex projective space, see \cite{bla1, bla2} for more details on these manifolds. Define a local section $U$ of $T\overline{M}$, i.e., a section of $T\mathcal{O}$,  by $du(U,X)=0$,  for every $X\in \mathcal{H}$, $u(U)=1$ and $v(U)=0$. Such local sections then define a global subbundle $\mathcal{V}$ by $\mathcal{V}|_{\mathcal{O}}=\mathrm{Span}\{U,JU\}$. Then, we have $T\overline{M}=\mathcal{H}\perp \mathcal{V}$ and we denote the projection map to $\mathcal{H}$ by $p:T\overline{M}\longrightarrow \mathcal{H}$. The subbundle $\mathcal{V}$ is called the {\it vertical subbundle} or {\it characteristic subbundle}. On the other hand, if $\overline{M}$ is a complex manifold with almost complex structure $J$, Hermitian metric $\overline{g}$ and open covering by coordinate neighbourhoods $\{\mathcal{O}_{i}\}$, $\overline{M}$ is called a {\it complex almost contact metric manifold} if it satisfies the following two conditions: (1) On each $\mathcal{O}_{i}$, there exists 1-forms $u_{i}$, $v_{i}=u_{i}\circ J$, with orthogonal dual vector fields $U_{i}$ and $V_{i}=-JU_{i}$, and (1,1)-tensor fields $G_{i}$ and $H_{i}=G_{i}J$ such that
\begin{align}
	&\;\;\;\;\;\;\;\;H^{2}_{i}=G^{2}_{i}=-I+u_{i}\otimes U_{i}+v_{i}\otimes V_{i},\label{cm1}\\
	&\overline{g}(G_{i}X,Y)=-\overline{g}(X,G_{i}Y),\;\;\;\; \overline{g}(U_{i},X)=u_{i}(X),\label{cm2}\\
	&\;\;\;\;\;\;\;\;G_{i}J=-JG_{i},\;\;\; G_{i}U=0,\;\;\; u_{i}(U)=1,\label{cm3}
\end{align}
 for all $X,Y\in\Gamma(T\overline{M})$.  (2) On the overlaps $\mathcal{O}_{i}\cap \mathcal{O}_{j}\ne \emptyset$, the above tensors transform as $u_{j}=au_{i}-bv_{i}$, $v_{j}=bu_{i}+av_{i}$, $G_{j}=aG_{i}-bH_{i}$ and  $H_{j}=bG_{i}+aH_{i}$, for some functions $a$, $b$ defined on the overlaps with $a^{2}+b^{2}=1$. It is obvious that $H_{i}$ also anticommutes with $J$ and is skew-symmetric with respect to $\overline{g}$ and that $G_{i}$ and $H_{i}$ annihilate both $U$ and $V$. Furthermore, the local contact form $\theta$ is $u-iv$ to within a nonvanishing complex-valued function multiple (see \cite{bla1}). Moreover, given a complex contact manifold, a complex almost contact metric structure can be chosen such that $du(X,Y)=\overline{g}(X,GY)+(\sigma \wedge v)(X,Y)$ and $dv(X,Y)=\overline{g}(X,HY)-(\sigma \wedge u)(X,Y)$, for all $X,Y\in \Gamma(T\overline{M})$, for some 1-form $\sigma$.  In this case we say that $\overline{M}$ has a {\it complex contact metric structure} $(u, v, U, V, G, H, \overline{g})$  \cite{bla1,bla2}.  In this case $\sigma(X)=\overline{g}(\overline{\nabla}_{X}U,V)$, where $\overline{\nabla}$ denotes the Levi-Civita connection on $\overline{M}$. We refer to a complex contact manifold with a complex almost contact metric structure satisfying these conditions as a {\it complex contact metric manifold} \cite{bla1,bla2}.

 Next,  for a complex contact metric structure \cite[p. 237]{bla1} defined local tensor fields $h_{U}$  and $h_{V}$ by $h_{U}=\frac{1}{2}\mathrm{sym}(\pounds_{U}G)\circ p$ and  $h_{V}=\frac{1}{2}\mathrm{sym}(\pounds_{V}H)\circ p$, where $\mathrm{sym}$ denotes the symmetric part; $h_{U}$ anticommutes with $G$, $h_{V}$ anticommutes with $H$, and
 \begin{align}
 	&\overline{\nabla}_{X}U=-GX-Gh_{U}X+\sigma(X)V,\label{cm4}\\
 	\mbox{and}\;\;\;&\overline{\nabla}_{X}V=-HX-Hh_{V}X-\sigma(X)U.\label{cm5}
  \end{align}
In view of (\ref{cm4}) and (\ref{cm5}) one easily sees that the integral surfaces of $\mathcal{V}$ are totally geodesic submanifolds. Furthermore, the associated metric $\overline{g}$ is projectable with respect to the foliation induced by the integrable subbundle $\mathcal{V}$ if and only if $h_{U}$ and $h_{V}$ vanish (see \cite{bla1} for more details).
\begin{lemma}\label{lemma11}
	The tensors $J,G$ and $H$ satisfies the following relations
	\begin{align}
		&JG=-H,\;\;\;JH=-HJ=G,\label{j1}\\
		H&G=-GH=J+u\otimes V-v\otimes U,\label{j2}\\
		GU=H&U=HV=0,\;\;\;\overline{g}(HX,Y)=-\overline{g}(X,HY),\label{j3}
	\end{align}
	for all $X,Y\in \Gamma(T\overline{M})$.
\end{lemma}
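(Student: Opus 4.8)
The plan is to treat the entire lemma as a chain of purely algebraic identities generated by the defining relation $H=GJ$, together with the structure equations (\ref{cm1})--(\ref{cm3}), the relation $J^{2}=-I$ for the almost complex structure, and the compatibility of $J$ and $G$ with the Hermitian metric $\overline{g}$. No analytic input is needed; everything reduces to manipulating these tensor relations and keeping track of signs.

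First I would dispatch (\ref{j1}). Since $H=GJ$ by definition and $GJ=-JG$ by (\ref{cm3}), one reads off $JG=-GJ=-H$, the first identity. Composing $H=GJ$ with $J$ and using $J^{2}=-I$ then gives $JH=(JG)J=(-GJ)J=-GJ^{2}=G$ and $HJ=(GJ)J=GJ^{2}=-G$, so that $JH=-HJ=G$, which completes (\ref{j1}).

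Next I would establish (\ref{j2}). Writing $GH=G(GJ)=G^{2}J$ and $HG=(GJ)G=G(JG)=-G^{2}J$ via the anticommutation $JG=-GJ$ immediately yields $HG=-GH$. To evaluate $HG=-G^{2}J$ explicitly I would substitute $G^{2}=-I+u\otimes U+v\otimes V$ from (\ref{cm1}) and act on an arbitrary $X$; the two tensor-product terms are then simplified using $u(JX)=v(X)$ and $v(JX)=-u(X)$, both consequences of $v=u\circ J$ and $J^{2}=-I$. Collecting terms gives $(HG)X=JX+u(X)V-v(X)U$, that is, $HG=J+u\otimes V-v\otimes U$. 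For (\ref{j3}) I would first derive $GV=0$ from $GU=0$ (\ref{cm3}) by $GV=-G(JU)=-(GJ)U=(JG)U=J(GU)=0$, and then obtain $HU=G(JU)=-GV=0$ and $HV=G(JV)=GU=0$ using $JU=-V$ and $JV=U$. The skew-symmetry of $H$ follows from $\overline{g}(HX,Y)=\overline{g}(GJX,Y)=-\overline{g}(JX,GY)$ by (\ref{cm2}), followed by the skew-symmetry of $J$ with respect to $\overline{g}$ and the identity $JG=-H$, giving $\overline{g}(HX,Y)=\overline{g}(X,JGY)=-\overline{g}(X,HY)$.

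There is no genuine structural obstacle here; the entire argument is bookkeeping. The only place where care is required is the handling of the tensor-product terms $u\otimes U$ and $v\otimes V$ when composed with $J$ on the right, where I must correctly invoke $u\circ J=v$ and $v\circ J=-u$ and track the attendant signs, since an error there is precisely what would corrupt the correction terms in (\ref{j2}).
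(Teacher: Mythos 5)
Your proposal is correct and follows essentially the same route as the paper: both derive (\ref{j1}) from $H=GJ$ and the anticommutation $GJ=-JG$, obtain (\ref{j2}) by reducing $HG$ to $-G^{2}J$ and substituting (\ref{cm1}) with $u\circ J=v$, $v\circ J=-u$, and prove (\ref{j3}) by the same metric computation using (\ref{cm2}). The only cosmetic difference is that you pass through $GV=0$ and the explicit relations $JU=-V$, $JV=U$ where the paper applies the anticommutation $GJ=-JG$ directly.
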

\begin{proof}
	Using (\ref{cm3}), we have $JG=-GJ=-H$. On the other hand, $JH=-HJ=-GJ^{2}=G$. This proves (\ref{j1}). Then, in view of (\ref{cm1}) and (\ref{cm3}), we have $HG=GJG=-GGJ=-GH$, and $-GH=-G^{2}J=J-(u\circ J)\otimes U-(v\circ J)\otimes V=J+u\otimes V-v\otimes U$, proving (\ref{j2}). Note that $GU=0$, by (\ref{cm3}). Also, $HU=GJU=-JGU=0$. Furthermore, $HV=GJV=-GJ^{2}U=GU=0$. Finally, for any $X,Y\in \Gamma(T\overline{M})$, we have $\overline{g}(HX,Y)=\overline{g}(GJX,Y)=-\overline{g}(JX,GY)=\overline{g}(X,JGY)=-\overline{g}(X,GJY)=-\overline{g}(X,HY)$, in which we have used (\ref{cm2}) and (\ref{cm3}), which completes the proof.
\end{proof}

Consider the  tensor fields $S$ and $T$ given by
\begin{align}
	S&(X,Y)=[G,G](X,Y)+2\overline{g}(X,GY)U-2\overline{g}(X,HY)V\nonumber\\
	&+2\{v(Y)HX-v(X)HY\}+\sigma(GY)HX-\sigma(GX)HY\nonumber\\
	&\;\;\;\;\;\;\;\;\;\;\;\;\;\;\;\;\;\;\;\;+\sigma(X)GHY-\sigma(Y)GHX,\nonumber
\end{align}
\begin{align}
	T&(X,Y)=[H,H](X,Y)-2\overline{g}(X,GY)U+2\overline{g}(X,HY)V\nonumber\\
	&+2\{u(Y)GX-u(X)GY\}+\sigma(HX)GY-\sigma(HY)GX\nonumber\\
	&\;\;\;\;\;\;\;\;\;\;\;\;\;\;\;\;\;\;\;\;+\sigma(X)GHY-\sigma(Y)GHX,\nonumber
\end{align}
for all $X,Y\in \Gamma(T\overline{M})$. In the above, $[G,G]$ and $[H,H]$ denotes the Nijenhuis tensors of $G$ and $H$, respectively. Then, a complex contact metric structure is {\it normal} \cite[p. 251]{bla1} if  $S(X,Y)=T(X,Y)=0$, for all $X,Y\in \Gamma(\mathcal{H})$ and $S(U,X)=T(V,X)=0$, for all $X\in \Gamma(T\overline{M})$. {\it An important consequence of normality is that $h_{U}=0$, for every $U\in \Gamma(\mathcal{V})$}, see \cite[p. 251]{bla1}. {\it Throughout this paper, we take $\overline{M}$ to be normal}. Moreover, on  a normal complex contact manifold, $\overline{\nabla}J$, $\overline{\nabla}G$ and $\overline{\nabla}H$ satisfies the relations (see \cite[p. 252]{bla1} for more details).
\begin{align}
\overline{g}((\overline{\nabla}_{X}J)Y,Z)&=u(X)\nonumber\{d\sigma(Z,GY)-2\overline{g}(HY,Z)\}\nonumber\\
&\;\;\;\;\;\;\;\;\;\;\;\;\;\;\;\;\;\;+v(X)\{d\sigma(Z,HY)+ \overline{g}(GY,Z)\},\label{m1}\\
\overline{g}((\overline{\nabla}_{X}G)Y,Z)&=\sigma(X)\overline{g}(HY,Z)+v(X)d\sigma(GZ,GY)\nonumber\\
	&\;\;\;\;-2v(X)\overline{g}(HGY,Z)-u(Y)\overline{g}(X,Z) -v(Y)\overline{g}(JY,Z)\nonumber\\
	&\;\;\;\;\;\;\;\;\;\;\;\;\;\;\;\;\;\;\;\;+u(Z)\overline{g}(X,Y)+v(Z)\overline{g}(JX,Y),\label{e20}\\
	\overline{g}((\overline{\nabla}_{X}H)Y,Z)&=-\sigma(X)\overline{g}(GY,Z)-u(X)d\sigma(HZ,HY)\nonumber\\
	&\;\;\;\;-2u(X)\overline{g}(GHY,Z)-v(Y)\overline{g}(X,Z) +u(Y)\overline{g}(JY,Z)\nonumber\\
	&\;\;\;\;\;\;\;\;\;\;\;\;\;\;\;\;\;\;\;\;\;\;+v(Z)\overline{g}(X,Y)-u(Z)\overline{g}(JX,Y),\label{e21}
	\end{align}
for all $X,Y,Z\in \Gamma(T\overline{M})$.

From now on, we assume that $\overline{M}:=(\overline{M},u, v, U, V, G, H, \overline{g})$ is a $(4n+2)$-dimensional indefinite complex contact manifold, where $\overline{g}$ is a semi-Riemannian metric of index $4q$; $0<q<n$.

\section{Quaternion null submanifolds}\label{mainn}

In this section, we consider null submanifolds whose tangent bundles contain the characteristic vector bundle $\mathcal{V}=\mathrm{Span}\{U,V\}$ as their subbundle. Since $\mathcal{V}$ is a spacelike subbundle, it follows that if $\mathcal{V}\subset TM$, then it can not be a subbundle of $\mathrm{Rad}\,TM$ as it is already known to be degenerate. Therefore when $\mathcal{V}\subset TM$,  we shall always mean  $\mathcal{V}\subset S(TM)$. Then, borrowing from \cite[p. 366]{ds2}, we have the following definition

 \begin{definition}
 \rm{
 	A null submanifold $(M,g)$, tangent to $\mathcal{V}$, of an indefinite complex contact manifold $\overline{M}$ will be called a  quaternion null submanifold if both $\mathrm{Rad}\,TM$ and $S(TM)$ are invariant with respect to each tensor $J$, $G$ and $H$.
 	}
 \end{definition}
 In fact, if we let
\begin{align}
	J_{1}:=J,\;\;\;J_{2}:=G\;\;\;\mbox{and}\;\;\; J_{3}:=H,
\end{align}
then $M$ is a quaternion null submanifold if
\begin{align}
	J_{a}\mathrm{Rad}\,TM=\mathrm{Rad}\,TM\;\;\; \mbox{and}\;\;\; J_{a}S(TM)=S(TM),
\end{align}
 for all $a=1,2,3$. It is easy to see that the null transversal bundle $l\mathrm{tr}(TM)$  and the screen transversal bundle $S(TM^{\perp})$ are also invariant with respect to $J_{a}$. Next, assume that $M$ is quaternion null submanifold of a normal indefinite complex contact manifold, and let $X\in \Gamma(\mathrm{Rad}\,TM)$, then (\ref{mas10}) and (\ref{cm4}), and the fact that $\overline{M}$ is normal--leading to $h_{U},h_{V}=0$, implies that
 \begin{align}\label{f1}
 	\nabla_{X}U+h^{l}(X,U)+h^{s}(X,U)=-GX+\sigma(X)V.
 \end{align}
 It follows from (\ref{f1}) that $h^{l}(X,U)=h^{s}(X,U)=0$ and $\nabla_{X}U=-GX+\sigma(X)V$, for all $X\in \Gamma(\mathrm{Rad}\,TM)$. On the other hand, when $X\in
\Gamma(S(TM))$, we also get $h^{l}(X,U)=h^{s}(X,U)=0$ and $\nabla_{X}U=-GX+\sigma(X)V$. Putting all the above together, we have shown that $h^{l}(X,U)=h^{s}(X,U)=0$ and $\nabla_{X}U=-GX+\sigma(X)V$, for all $X\in \Gamma(TM)$. Also, one can easily show, using (\ref{mas10}) and (\ref{cm5}), that  $h^{l}(X,V)=h^{s}(X,V)=0$ and $\nabla_{X}V=-HX-\sigma(X)U$, for any $X\in \Gamma(TM)$. We therefore state the following result:

\begin{lemma}\label{lemmau}
	On any quaternion null submanifold of a normal indefinite complex contact manifolds, the following holds;
\begin{align}
		h^{l}(X,U)&=h^{s}(X,U)=0,\;\;\;\nabla_{X}U=-GX+\sigma(X)V, \label{w1}\\
		 \mbox{and}\;\;\; h^{l}(X,V)&=h^{s}(X,V)=0, \;\; \; \nabla_{X}V=-HX-\sigma(X)U,\label{w2}
\end{align}	
for all $X\in \Gamma(TM)$.
\end{lemma}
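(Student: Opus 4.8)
The plan is to prove Lemma~\ref{lemmau} by a direct computation that exploits the Gauss formula~\eqref{mas10} together with the structure equations~\eqref{cm4} and~\eqref{cm5}, and then to match components living in the distinct summands of the decomposition~\eqref{eq04}. The key observation is that for a quaternion null submanifold, $U,V\in\Gamma(\mathcal V)\subset\Gamma(S(TM))\subset\Gamma(TM)$, so both sides of each structure equation can be decomposed against the ambient splitting $T\overline M=S(TM)\perp S(TM^\perp)\perp\{\mathrm{Rad}\,TM\oplus l\mathrm{tr}(TM)\}$. Since $M$ is quaternion, the tensors $J$, $G$ and $H$ preserve $TM$ (both $\mathrm{Rad}\,TM$ and $S(TM)$ are invariant), hence $GX\in\Gamma(TM)$ and $HX\in\Gamma(TM)$ for every $X\in\Gamma(TM)$; this is the fact that will force the transversal components to vanish.

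First I would fix $X\in\Gamma(TM)$ and apply the Gauss formula~\eqref{mas10} to $\overline\nabla_X U$, writing
\begin{align}
\overline\nabla_X U=\nabla_X U+h^l(X,U)+h^s(X,U).\nonumber
\end{align}
Next I would substitute the normality hypothesis $h_U=0$ (stated in the excerpt as a consequence of normality) into~\eqref{cm4}, so that the right-hand side collapses to $\overline\nabla_X U=-GX+\sigma(X)V$. Equating the two expressions gives the single identity displayed in~\eqref{f1}. The decisive step is then to read off components: by quaternion invariance, $-GX\in\Gamma(TM)$ and $\sigma(X)V\in\Gamma(\mathcal V)\subset\Gamma(TM)$, so the entire right-hand side is tangent to $M$. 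On the left-hand side, $\nabla_X U\in\Gamma(TM)$ while $h^l(X,U)\in\Gamma(l\mathrm{tr}(TM))$ and $h^s(X,U)\in\Gamma(S(TM^\perp))$ are transversal. Since the sum $\mathrm{tr}(TM)=l\mathrm{tr}(TM)\perp S(TM^\perp)$ in~\eqref{eq08} meets $TM$ only in the zero section, projecting~\eqref{f1} onto $\Gamma(\mathrm{tr}(TM))$ yields $h^l(X,U)=h^s(X,U)=0$, and projecting onto $\Gamma(TM)$ leaves $\nabla_X U=-GX+\sigma(X)V$. This establishes~\eqref{w1}.

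For~\eqref{w2} I would repeat the identical argument with $V$ in place of $U$, using~\eqref{cm5} and the fact that $h_V=0$ on a normal manifold (by symmetry with the $U$ case, since $G_i$ and $H_i$ play interchangeable roles under the structure transformations). The Gauss formula gives $\overline\nabla_X V=\nabla_X V+h^l(X,V)+h^s(X,V)$, while~\eqref{cm5} reduces to $\overline\nabla_X V=-HX-\sigma(X)U$; invariance of $TM$ under $H$ shows $-HX\in\Gamma(TM)$ and $\sigma(X)U\in\Gamma(\mathcal V)\subset\Gamma(TM)$, so the same transversal-versus-tangential comparison forces $h^l(X,V)=h^s(X,V)=0$ and $\nabla_X V=-HX-\sigma(X)U$.

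I do not anticipate a genuine obstacle here: the lemma is essentially a bookkeeping consequence of combining the ambient structure equations with the Gauss formula and the quaternion invariance of $TM$. The one point requiring care is the vanishing of $h_U$ and $h_V$, which rests on the global normality assumption stated in the excerpt (``an important consequence of normality is that $h_U=0$''); without it the terms $-Gh_U X$ and $-Hh_V X$ in~\eqref{cm4}--\eqref{cm5} would survive and, since $h_U$ and $h_V$ need not preserve $TM$ a priori, could contribute nonzero transversal parts. Assuming normality throughout (as the paper does), the calculation proceeds cleanly component by component.
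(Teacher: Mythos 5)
Your proposal is correct and follows essentially the same route as the paper: combine the Gauss formula \eqref{mas10} with \eqref{cm4}--\eqref{cm5}, use normality to kill $h_U$ and $h_V$, and compare tangential versus transversal components using the quaternion invariance of $\mathrm{Rad}\,TM$ and $S(TM)$. The only cosmetic difference is that the paper runs the component comparison separately for $X\in\Gamma(\mathrm{Rad}\,TM)$ and $X\in\Gamma(S(TM))$ before combining, whereas you handle all of $\Gamma(TM)$ at once, which is equivalent.
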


\noindent Now, we have the following result.
\begin{theorem}
	On any quaternion null submanifold of a normal indefinite complex contact manifold, the following hold;
	\begin{align}
		&h^{l}(JX,JY)=-h^{l}(X,Y),\;\;\; h^{s}(JX,JY)=-h^{s}(X,Y),\label{a1}\\
		&h^{l}(GX,GY)=-h^{l}(X,Y),\;\;\; h^{s}(GX,GY)=-h^{s}(X,Y),\label{a2}\\
		&h^{l}(HX,HY)=-h^{l}(X,Y),\;\;\; h^{s}(HX,HY)=-h^{s}(X,Y),\label{a3}
	\end{align}
	for all $X,Y\in \Gamma(TM)$.
\end{theorem}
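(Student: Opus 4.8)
The plan is to combine the Gauss formula \eqref{mas10} with the normality identities \eqref{m1}, \eqref{e20} and \eqref{e21} for $\overline{\nabla}J$, $\overline{\nabla}G$ and $\overline{\nabla}H$. Writing $J_{1}=J$, $J_{2}=G$, $J_{3}=H$, I would first note that, since each $J_{a}$ preserves $TM$, $l\mathrm{tr}(TM)$ and $S(TM^{\perp})$, expanding $\overline{\nabla}_{A}(J_{a}B)=(\overline{\nabla}_{A}J_{a})B+J_{a}\overline{\nabla}_{A}B$ via \eqref{mas10} and comparing the $l\mathrm{tr}(TM)$- and $S(TM^{\perp})$-parts yields, for all $A,B\in\Gamma(TM)$,
\begin{align}
h^{l}(A,J_{a}B)&=[(\overline{\nabla}_{A}J_{a})B]^{l}+J_{a}h^{l}(A,B),\nonumber\\
h^{s}(A,J_{a}B)&=[(\overline{\nabla}_{A}J_{a})B]^{s}+J_{a}h^{s}(A,B),\nonumber
\end{align}
where $[\,\cdot\,]^{l}$ and $[\,\cdot\,]^{s}$ denote the projections onto $l\mathrm{tr}(TM)$ and $S(TM^{\perp})$. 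Both of these bundles are orthogonal to $S(TM)\supset\mathcal{V}$, hence lie in $\mathcal{H}$, so \eqref{cm1} gives $J_{a}^{2}=-I$ there. Applying the two identities in succession (with $A=J_{a}X$, $B=Y$, then $A=Y$, $B=X$, using the symmetry of $h$) therefore collapses the theorem to proving
\begin{align}
&[(\overline{\nabla}_{J_{a}X}J_{a})Y+J_{a}(\overline{\nabla}_{Y}J_{a})X]^{l}=[(\overline{\nabla}_{J_{a}X}J_{a})Y+J_{a}(\overline{\nabla}_{Y}J_{a})X]^{s}=0,\nonumber
\end{align}
for all $X,Y\in\Gamma(TM)$ and $a=1,2,3$.

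Second, I would reduce to horizontal arguments, namely $u(X)=v(X)=u(Y)=v(Y)=0$. Decomposing a general field into its $\mathcal{H}$-part plus a combination of $U$ and $V$, the relations $JU=-V$, $JV=U$ and $GU=GV=HU=HV=0$ from Lemma \ref{lemma11} show that $J_{a}U$ and $J_{a}V$ are again vertical or zero; together with Lemma \ref{lemmau}, which forces $h^{l}$ and $h^{s}$ to vanish as soon as $U$ or $V$ is inserted, every contribution of a vertical component of $X$ or $Y$ cancels from both sides of \eqref{a1}--\eqref{a3}. Hence it is enough to establish the three identities for horizontal $X,Y$.

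For horizontal arguments the $J$-case is immediate: the whole right-hand side of \eqref{m1} is proportional to $u$ and $v$ of the differentiation direction, and since $J$ preserves $\mathcal{H}$ the directions $JX$ and $Y$ remain horizontal, so $(\overline{\nabla}_{JX}J)Y$ and $(\overline{\nabla}_{Y}J)X$ vanish outright. The main obstacle is the $G$- and $H$-cases, because \eqref{e20}--\eqref{e21} carry a term $\sigma(\cdot)\overline{g}(H\cdot,\cdot)$, respectively $\sigma(\cdot)\overline{g}(G\cdot,\cdot)$, that is \emph{not} proportional to $u$ or $v$, so the correction tensors no longer vanish as vectors. Here I would instead test the required combination against $\xi\in\Gamma(\mathrm{Rad}\,TM)$ via $\overline{g}(\,\cdot\,,\xi)$ to extract its $l\mathrm{tr}(TM)$-component, and against $W\in\Gamma(S(TM^{\perp}))$ for its $S(TM^{\perp})$-component, and verify summand by summand that everything dies. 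The structural facts I expect to drive this are: $\overline{g}(GY,\xi)=\overline{g}(HY,\xi)=\overline{g}(GY,W)=\overline{g}(HY,W)=0$, since $G,H$ preserve $TM$ while $\mathrm{Rad}\,TM\perp S(TM)$ and $TM\perp S(TM^{\perp})$; the vanishing $u(G\cdot)=v(G\cdot)=u(H\cdot)=v(H\cdot)=0$, coming again from $GU=GV=HU=HV=0$; the identities \eqref{j2} giving $HG\xi=J\xi$ and $GH\xi=-J\xi\in\Gamma(\mathrm{Rad}\,TM)$; and $\overline{g}(X,J\xi)=0$ for horizontal $X$, because $J\xi\in\mathrm{Rad}\,TM$ is orthogonal to $S(TM)$ and is null. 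Substituting these into \eqref{e20}--\eqref{e21} annihilates each term, which delivers \eqref{a1}--\eqref{a3}.
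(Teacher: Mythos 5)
Your proposal is correct and runs on the same engine as the paper's proof: expand $\overline{\nabla}_{A}(J_{a}B)$ through the Gauss formula \eqref{mas10}, use the normality identities \eqref{m1}--\eqref{e21} to control the correction terms, and iterate using $J_{a}^{2}=-I$ on the transversal bundles together with the symmetry of $h$. The one real difference is how the corrections are disposed of. The paper first establishes the clean commutation rules $h^{l}(X,J_{a}Y)=J_{a}h^{l}(X,Y)$ and $h^{s}(X,J_{a}Y)=J_{a}h^{s}(X,Y)$ for \emph{all} $X,Y\in\Gamma(TM)$, which forces it to deal with the $d\sigma$-terms of \eqref{m1}--\eqref{e21}; it does so by evaluating at $X=U$ and $X=V$ and invoking Lemma \ref{lemmau} to deduce $d\sigma(\xi,GY)=d\sigma(\xi,HY)=d\sigma(G\xi,GY)=0$ and their screen-transversal analogues. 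You instead iterate first and then reduce to horizontal arguments, so the coefficients $u(\cdot),v(\cdot)$ multiplying every $d\sigma$-term vanish identically and those terms never need to be analysed; the surviving $\sigma(\cdot)\overline{g}(H\cdot,\cdot)$- and $u(Z),v(Z)$-type terms die by exactly the orthogonality facts you list, all of which are valid (in particular $\overline{g}(HY,\xi)=\overline{g}(HY,W)=0$ because $H$ preserves $TM$, and $u,v$ vanish on $\mathrm{Rad}\,TM$, $l\mathrm{tr}(TM)$ and $S(TM^{\perp})$ since these are orthogonal to $S(TM)\supset\mathcal{V}$). Your route is marginally cleaner in that it never has to prove the auxiliary $d\sigma$-vanishing statements; the paper's route yields the stronger intermediate identity $h(X,J_{a}Y)=J_{a}h(X,Y)$ for arbitrary tangent arguments, which it reuses implicitly in the iteration step. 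The summand-by-summand verification you defer does close, so there is no gap.
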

\begin{proof}
	Letting $Z=\xi$ in (\ref{m1}), we get
	\begin{align}\label{w2}
		\overline{g}((\overline{\nabla}_{X}J)Y,\xi)&=u(X)d\sigma(\xi,GY)+v(X)d\sigma(\xi,HY),
	\end{align}
	for all $X,Y\in \Gamma(TM)$. Using the Gauss formula (\ref{mas10}) in (\ref{w2}), gives
	\begin{align}\label{w3}
			\overline{g}(h^{l}(X,JY),\xi)&-\overline{g}(Jh^{l}(X,Y),\xi)\nonumber\\
		&=u(X)d\sigma(\xi,GY)+v(X)d\sigma(\xi,HY),	
\end{align}
for all $X,Y\in \Gamma(TM)$.	 Setting $X=U$  in (\ref{w3}) and using (\ref{w1}), we get $d\sigma(\xi,GY)=0$. On the other hand, setting $X=V$ in (\ref{w3}) and using (\ref{w2}) give us $d\sigma(\xi,HY)=0$, for all $Y\in \Gamma(TM)$. It then follows from (\ref{w3}) that
\begin{align}\label{w4}
	h^{l}(X,JY)=Jh^{l}(X,Y),
\end{align}
for all $X,Y\in \Gamma(TM)$. Then, replacing $X$ with $JX$ in (\ref{w4}), we get
\begin{align}\label{w5}
	h^{l}(JX,JY)=Jh^{l}(JX,Y)=J^{2}h^{l}(X,Y)=-h^{l}(X,Y),
\end{align}
which proves the first relation in (\ref{a1}). Next, letting $Z=W$ in (\ref{m1}) and using Gauss equation (\ref{mas10}), we get
\begin{align}\label{w6}
			\overline{g}(h^{s}(X,JY),W)&-\overline{g}(Jh^{s}(X,Y),W)\nonumber\\
		&=u(X)d\sigma(W,GY)+v(X)d\sigma(W,HY),	
\end{align}
for all $X,Y\in \Gamma(TM)$. Setting $X=U$ and $X=V$ in (\ref{w6}), by turns, and using (\ref{w1}) and (\ref{w2}), we get $\sigma(W,GY)=0$ and $d\sigma(W,HY)=0$, for all $Y\in \Gamma(TM)$. Thus, (\ref{w6}) reduces to
\begin{align}\label{w7}
	h^{s}(X,JY)=Jh^{s}(X,Y).
\end{align}
Replacing $X$ with $JX$ in (\ref{w7}), gives
\begin{align}\label{w8}
	h^{s}(JX,JY)=Jh^{s}(JX,Y)=J^{2}h^{s}(X,Y)=-h^{s}(X,Y),
\end{align}
which proves the second relation in (\ref{a1}).

Turning  to (\ref{e20}), we  let $Z=\xi$ and use Gauss equation (\ref{mas10}) to get
\begin{align}\label{w8}
	\overline{g}(h^{l}(X,GY),\xi)&-\overline{g}(Gh^{l}(X,Y),\xi)=v(X)d\sigma(G\xi,GY),
\end{align}
for all $X,Y\in \Gamma(TM)$. Then, setting $X=V$ in (\ref{w8}) and using (\ref{w2}), we get $d\sigma(G\xi,GY)=0$. It then follows that
\begin{align}\label{w9}
	h^{l}(X,GY)=Gh^{l}(X,Y),
\end{align}
for all $X,Y\in \Gamma(TM)$. Replacing $X$ by $GY$ in (\ref{w9}) and using (\ref{cm1}), gives
\begin{align}\label{w9}
	h^{l}(GX,GY)=Gh^{l}(GX,Y)=G^{2}h^{l}(X,Y)=-h^{l}(X,Y),
\end{align}
which proves the first relation in (\ref{a2}). Next, letting $Z=W$ in (\ref{e20}) and following similar steps to those above, we get
\begin{align}\label{w10}
	h^{s}(GX,GY)=Gh^{s}(GX,Y)=G^{2}h^{l}(X,Y)=-h^{s}(X,Y),
\end{align}
which give us the second relation of (\ref{a2}).

Finally, using (\ref{e21}) and similar steps as above, we get
\begin{align}\label{w11}
	h^{l}(HX,HY)=-h^{l}(X,Y)\;\;\;\mbox{and} \;\;\; h^{s}(HX,HY)=-h^{s}(X,Y),
\end{align}
for all $X,Y\in \Gamma(TM)$, which proves (\ref{a3}), and completing the proof.
\end{proof}
\begin{corollary}\label{cor1}
	Any quaternion null submanifold $(M,g)$ of a normal indefinite complex contact manifold is totally geodesic.
\end{corollary}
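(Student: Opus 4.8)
The plan is to deduce total geodesy from the symmetry relations just established, namely \eqref{a1}, \eqref{a2}, and \eqref{a3}, together with the vanishing of the second fundamental form on the characteristic directions recorded in Lemma~\ref{lemmau}. Recall that $M$ is totally geodesic precisely when $h=h^{l}+h^{s}$ vanishes identically on $\Gamma(TM)$, so it suffices to show $h^{l}(X,Y)=0$ and $h^{s}(X,Y)=0$ for all $X,Y\in\Gamma(TM)$. The key structural identity I would exploit is the third relation in Lemma~\ref{lemma11}, namely $HG=-GH=J+u\otimes V-v\otimes U$, which ties the three tensors $J$, $G$, $H$ together algebraically.

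First I would compute $h^{l}(HGX,HGY)$ in two different ways. Applying \eqref{a3} followed by \eqref{a2} gives $h^{l}(HGX,HGY)=-h^{l}(GX,GY)=h^{l}(X,Y)$. On the other hand, since $HG=J+u\otimes V-v\otimes U$, I can expand $HGX$ and $HGY$ and use bilinearity of $h^{l}$ together with the vanishing facts $h^{l}(X,U)=h^{l}(X,V)=0$ from Lemma~\ref{lemmau}. Because every term in $HGX$ beyond $JX$ lies along $U$ or $V$, and $h^{l}$ kills those directions, the expansion collapses to $h^{l}(JX,JY)$, which equals $-h^{l}(X,Y)$ by \eqref{a1}. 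Equating the two computations yields $h^{l}(X,Y)=-h^{l}(X,Y)$, hence $h^{l}(X,Y)=0$ for all $X,Y\in\Gamma(TM)$.

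An entirely parallel argument handles the screen transversal part: I would compute $h^{s}(HGX,HGY)$ using \eqref{a3} and \eqref{a2} to obtain $h^{s}(X,Y)$, and then re-expand via $HG=J+u\otimes V-v\otimes U$, using $h^{s}(X,U)=h^{s}(X,V)=0$ from Lemma~\ref{lemmau} to collapse the expansion to $h^{s}(JX,JY)=-h^{s}(X,Y)$. Comparing the two expressions forces $h^{s}(X,Y)=0$ for all $X,Y\in\Gamma(TM)$. With both $h^{l}$ and $h^{s}$ vanishing, the full second fundamental form $h$ vanishes by \eqref{mas10}, and $M$ is totally geodesic.

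The main obstacle I anticipate is bookkeeping the cross terms in the double expansion of $h^{l}(HGX,HGY)$ and $h^{s}(HGX,HGY)$: writing $HG$ as $J+u\otimes V-v\otimes U$ produces nine terms when applied to both slots, and one must carefully verify that every term containing a factor of $U$ or $V$ in either argument is annihilated by Lemma~\ref{lemmau}, leaving only the $h(JX,JY)$ contribution. Once this is checked, the sign mismatch between the $J$-route (giving $-h$) and the combined $G,H$-route (giving $+h$) is exactly what forces vanishing, so no further estimates or integrations are needed.
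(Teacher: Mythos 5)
Your proposal is correct and follows essentially the same route as the paper: both use the identity $HG=J+u\otimes V-v\otimes U$ from Lemma~\ref{lemma11} together with the vanishing of $h^{l}(\cdot,U)$, $h^{l}(\cdot,V)$, $h^{s}(\cdot,U)$, $h^{s}(\cdot,V)$ from Lemma~\ref{lemmau} to identify $h(JX,JY)$ with $h(HGX,HGY)$, and then play the sign from \eqref{a1} against the sign obtained from composing \eqref{a3} and \eqref{a2}. The only difference is presentational (you expand $HGX$ into $JX$ plus vertical terms, the paper expands $JX$ into $HGX$ plus vertical terms), which is the same computation.
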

\begin{proof}
	By (\ref{j2}) of Lemma \ref{lemma11} and Lemma \ref{lemmau}, we have
	\begin{align}
		h^{l}(JX,JY)&=h^{l}(HGX-u(X)V+v(X)U,HGY-u(Y)V+v(Y)U)\nonumber\\
		&=h^{l}(HGX,HGY).\label{w12}
	\end{align}
	Considering (\ref{a2}), (\ref{a3}) and (\ref{w12}), we derive
	\begin{align}
		h^{l}(JX,JY)&=h^{l}(HGX,HGY)=-h^{l}(GX,GY)=h^{l}(X,Y),\label{w13}
	\end{align}
	for all $X,Y\in \Gamma(TM)$. It then follows from (\ref{w13}) and (\ref{a1}) that $h^{l}(X,Y)=0$, for all $X,Y\in \Gamma(TM)$. On the other hand, using  (\ref{j2}), (\ref{a2})  and (\ref{a3}), we derive
	\begin{align}\label{w14}
		h^{s}(JX,JY)=h^{s}(X,Y).
	\end{align}
	It follows from (\ref{w14}) and (\ref{a1}) that $h^{s}(X,Y)=0$, for all $X,Y\in\Gamma(TM)$. Hence, $M$ is totally geodesic, which completes the proof.
\end{proof}
\begin{remark}
\rm{
\begin{enumerate}
	\item A similar result to Corollary \ref{cor1} was obtained in \cite[Theorem 8.3.1, p. 367]{ds2} for quaternion null submanifold of indefinite quaternion Kaehler manifolds, although in our case arriving at such a result requires a little more work due to the nature of ambient indefinite complex contact manifold.
	\item Corollary \ref{cor1} shows that it would be more interesting to study other types of null submanifolds of indefinite complex contact manifolds.
	\end{enumerate}
}
\end{remark}
\section{Screen real null submanifolds}\label{main22}

In this section, we define screen real null submanifold of an indefinite complex contact manifold. But first, we note that when the characteristic subbundle $\mathcal{V}=Span\{U,V\}$ is a subbundle of the transversal bundle $\mathrm{tr}(TM)(=l\mathrm{tr}(TM)\perp S(TM^{\perp}))$, then it cannot be a subbundle of null transversal bundle $l\mathrm{tr}(TM)$ entirely. This is due to the fact that $l\mathrm{tr}(TM)$ is a null subbundle yet $\mathcal{V}$ is a spacelike subbundle.  Therefore, by $\mathcal{V}\subset \mathrm{tr}(TM)$ we shall always mean $\mathcal{V}\subset S(TM^{\perp})$. This choice has the following consequence.
\begin{lemma}\label{impo}
Let $(M,g)$ be a null submanifold of a normal indefinite complex contact manifold $\overline{M}$. If $\mathcal{V}\subset S(TM^{\perp})$ then $G(TM),H(TM)\subset TM^{\perp}=\mathrm{Rad}\,TM\perp S(TM^{\perp})$.
\end{lemma}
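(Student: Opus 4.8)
The plan is to reduce the set-containments $G(TM),H(TM)\subset TM^{\perp}$ to the vanishing of the two bilinear forms $(X,Y)\mapsto\overline{g}(X,GY)$ and $(X,Y)\mapsto\overline{g}(X,HY)$ on $\Gamma(TM)\times\Gamma(TM)$. Indeed, since $\overline{g}$ is nondegenerate and $TM^{\perp}$ is precisely the $\overline{g}$-orthogonal complement of $TM$, to prove $GY\in\Gamma(TM^{\perp})$ for a fixed $Y\in\Gamma(TM)$ it suffices to check $\overline{g}(X,GY)=0$ for every $X\in\Gamma(TM)$, and analogously for $H$. The first thing I would record is that the hypothesis $\mathcal{V}\subset S(TM^{\perp})\subset TM^{\perp}$ forces $U$ and $V$ to be $\overline{g}$-orthogonal to $TM$; combined with (\ref{cm2}), this yields $u(X)=\overline{g}(U,X)=0$ and $v(X)=\overline{g}(V,X)=0$ for all $X\in\Gamma(TM)$.

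Next I would invoke the two structure identities recorded in the preliminaries, namely $du(X,Y)=\overline{g}(X,GY)+(\sigma\wedge v)(X,Y)$ and $dv(X,Y)=\overline{g}(X,HY)-(\sigma\wedge u)(X,Y)$, valid for all $X,Y\in\Gamma(T\overline{M})$. Restricting to $X,Y\in\Gamma(TM)$, the wedge terms collapse, since $(\sigma\wedge v)(X,Y)=\sigma(X)v(Y)-\sigma(Y)v(X)=0$ and likewise $(\sigma\wedge u)(X,Y)=0$, because $u$ and $v$ annihilate $\Gamma(TM)$ by the previous step. Hence on $TM$ the two identities simplify to $\overline{g}(X,GY)=du(X,Y)$ and $\overline{g}(X,HY)=dv(X,Y)$.

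It then remains to show that $du$ and $dv$ vanish on $\Gamma(TM)\times\Gamma(TM)$. Using the intrinsic formula $du(X,Y)=X(u(Y))-Y(u(X))-u([X,Y])$, the first two terms vanish because $u(Y)$ and $u(X)$ are identically zero functions on $M$, while the last term vanishes because $TM$ is involutive, so that $[X,Y]\in\Gamma(TM)$ and therefore $u([X,Y])=0$; the same reasoning applies verbatim to $dv$. Consequently $\overline{g}(X,GY)=\overline{g}(X,HY)=0$ for all $X,Y\in\Gamma(TM)$, which by the reduction in the first paragraph gives $G(TM),H(TM)\subset TM^{\perp}=\mathrm{Rad}\,TM\perp S(TM^{\perp})$, as claimed.

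I expect the argument to be essentially mechanical once the right identities are assembled, so there is no deep obstacle here. The only points demanding a little care are the two structural reductions: recognizing that $\mathcal{V}\subset S(TM^{\perp})$ is exactly what makes $u$ and $v$ annihilate $TM$, and recalling that the Lie bracket of tangent fields remains tangent, so that the $u([X,Y])$ and $v([X,Y])$ terms drop. The skew-symmetry of $G$ and $H$ from (\ref{j3}) is consistent with the antisymmetry of the forms we compute and could provide an alternative route, but it is not strictly needed for this proof.
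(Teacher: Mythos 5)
Your proof is correct, but it follows a genuinely different route from the paper's. The paper argues via the connection: by normality $h_{U}=h_{V}=0$, so (\ref{cm4}) gives $\overline{g}(\overline{\nabla}_{X}U,Y)=-\overline{g}(GX,Y)$ for $X,Y\in\Gamma(TM)$ (the $\sigma(X)V$ term dying because $V\perp TM$); metric compatibility and the Gauss formula (\ref{mas10}) convert the left-hand side into $\overline{g}(U,h^{s}(X,Y))$, and the conclusion follows because $(X,Y)\mapsto\overline{g}(GX,Y)$ is skew-symmetric while $(X,Y)\mapsto\overline{g}(U,h^{s}(X,Y))$ is symmetric, forcing both to vanish; the $H$ case runs identically from (\ref{cm5}). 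You instead use the structure identities $du(X,Y)=\overline{g}(X,GY)+(\sigma\wedge v)(X,Y)$ and $dv(X,Y)=\overline{g}(X,HY)-(\sigma\wedge u)(X,Y)$, observe that $\mathcal{V}\subset TM^{\perp}$ makes $u$ and $v$ annihilate $\Gamma(TM)$, and then kill $du$ and $dv$ on $TM$ via the intrinsic formula for the exterior derivative of a $1$-form together with the involutivity of $TM$. Your argument is more elementary and slightly more general: it needs neither the Levi-Civita connection nor normality (nor, in fact, the sharper hypothesis that $\mathcal{V}$ lies in $S(TM^{\perp})$ rather than merely in $TM^{\perp}$), so it applies to any complex contact metric structure. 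What the paper's route buys is consistency with the Gauss--Weingarten machinery used throughout the rest of the text, plus the useful by-product $\overline{g}(U,h^{s}(X,Y))=0$, which foreshadows the computation in Lemma \ref{lemma3}. The one point worth tightening in your write-up is the sign-convention caveat for $d$ and $\wedge$ (a possible factor of $\tfrac12$), though since every term you compute is zero this does not affect the conclusion.
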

\begin{proof}
	Assume that $\mathcal{V}\subset S(TM^{\perp})$. Then,  from (\ref{cm4}), we have
	\begin{align}\label{u1}
		-\overline{g}(GX,Y)=\overline{g}(\overline{\nabla}_{X}U,Y),
	\end{align}
	for all $X,Y\in \Gamma(TM)$. It then follows from (\ref{u1}) and (\ref{mas10}) that
	\begin{align}\label{u2}
		\overline{g}(GX,Y)=\overline{g}(U,\overline{\nabla}_{X}Y)=\overline{g}(U,h^{s}(X,Y)).
	\end{align}
Note that the left hand side of (\ref{u2}) is skew-symmetric while the right hand side is symmetric. So $\overline{g}(GX,Y)=0$, for all $X,Y\in \Gamma(TM)$. Similarly, $\overline{g}(HX,Y)=0$. Hence, $G(TM)\subset TM^{\perp}$  and  $H(TM)\subset TM^{\perp}$, which completes the proof.
\end{proof}

\begin{definition}\label{def2}
\rm{
	Let $(M,g)$  be a null submanifold of an indefinite complex contact manifold $\overline{M}$, which is transversal to the characteristic subbundle $\mathcal{V}$, that is $\mathcal{V}\subset S(TM^{\perp})$. Then, $M$ is called a screen real null submanifold if $J_{a}\mathrm{Rad}\,TM=\mathrm{Rad}\,TM$ and $J_{a}S(TM)\subset S(TM^{\perp})$, for all $a=1,2,3$.}
\end{definition}
\begin{remark}\label{rema}
\rm {
\begin{enumerate}
	\item It follows immediately from Definition \ref{def2} that the global 1-forms $u$ and $v$ vanishes on $TM$. Moreover, it is easy to see that $l\mathrm{tr}(TM)$ is also invariant with respect to $J_{a}$, for all $a=1,2,3$.
	\item Another important point to note is that $A_{W}$, for all $W\in \Gamma(S(TM^{\perp}))$, is never a  screen valued operator. In fact, using (\ref{mas12}) and (\ref{cm4}), with $W=U$, we have $-GX+\sigma(X)V=-A_{U}X+\nabla_{X}^{s}U+D^{l}(X,U)$, for any $X\in \Gamma(TM)$. It would then mean that $\overline{g}(GX,N)=\overline{g}(A_{U}X,N)$, for all $N\in \Gamma(l\mathrm{tr}(TM))$. Thus, if $A_{U}$  is screen valued, we obtain $\overline{g}(GX,N)=0$. Now, replacing $N$ with $GN$ and using (\ref{m1}), we get $\overline{g}(X,N)=0$, for all $X\in \Gamma(TM)$, which is a contradiction. A similar conclusion is arrived at if one uses (\ref{cm5}).
\end{enumerate}
	}
\end{remark}

\noindent Next, for any $W\in \Gamma(S(TM^{\perp}))$ we have
\begin{align}\label{w20}
	J_{a}W=B_{a}W+C_{a}W,\;\;\forall\, a=1,2,3,
\end{align}
where, $B_{a}W$ and $C_{a}W$ are the tangential and transversal parts of $J_{a}W$.
\begin{lemma}\label{remma}
	On any screen real null submanifold $(M,g)$ of a normal indefinite complex contact manifold $\overline{M}$, we have $\overline{g}((\overline{\nabla}_{X}J_{a})\xi,PY)=0$, for all $X,Y\in \Gamma(TM)$ and $\xi \in \Gamma(\mathrm{Rad}\,TM)$.
\end{lemma}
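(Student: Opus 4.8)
The plan is to specialize the three structural identities (\ref{m1}), (\ref{e20}) and (\ref{e21}) for $\overline{\nabla}J$, $\overline{\nabla}G$ and $\overline{\nabla}H$ by inserting the arguments $Y=\xi$ and $Z=PY$, and then to check that every term on the right-hand side drops out. Two inputs carry the whole argument. First, by Remark \ref{rema}(1) the global $1$-forms $u$ and $v$ vanish identically on $TM$, so that $u(X)=v(X)=u(\xi)=v(\xi)=u(PY)=v(PY)=0$ for all $X\in\Gamma(TM)$, $\xi\in\Gamma(\mathrm{Rad}\,TM)$ and $PY\in\Gamma(S(TM))$. Second, by Definition \ref{def2} the radical distribution is invariant under each $J_{a}$, so $G\xi,H\xi\in\Gamma(\mathrm{Rad}\,TM)$, whereas $PY\in\Gamma(S(TM))$; the orthogonal splitting (\ref{mas2}) then forces $\overline{g}(G\xi,PY)=\overline{g}(H\xi,PY)=0$.

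For $a=1$, i.e. $J_{1}=J$, the case is immediate: putting $Y=\xi$ and $Z=PY$ in (\ref{m1}), every summand on the right-hand side carries an overall factor of either $u(X)$ or $v(X)$, both of which vanish since $X\in\Gamma(TM)$. Hence $\overline{g}((\overline{\nabla}_{X}J)\xi,PY)=0$ with no further work.

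For $a=2,3$, i.e. $J_{2}=G$ and $J_{3}=H$, I would substitute $Y=\xi$, $Z=PY$ into (\ref{e20}) and (\ref{e21}) respectively and sort the terms into two groups. Most terms carry a factor drawn from $u(X),v(X),u(\xi),v(\xi),u(PY),v(PY)$ and hence die by the first input above. The only survivors are the $\sigma(X)$-terms, namely $\sigma(X)\overline{g}(H\xi,PY)$ in the $G$-identity and $-\sigma(X)\overline{g}(G\xi,PY)$ in the $H$-identity, which do not come with a $u$- or $v$-factor. These I kill with the second input: $H\xi$ and $G\xi$ lie in $\mathrm{Rad}\,TM$ by invariance, $PY$ lies in $S(TM)$, and these subbundles are $\overline{g}$-orthogonal.

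The one place where slightly more than the vanishing of $u$ and $v$ is needed is precisely these $\sigma(X)$-terms, so I expect that to be the main (and only genuine) obstacle; everything else is bookkeeping. The conceptual point worth highlighting is that it is exactly the combination of invariance of $\mathrm{Rad}\,TM$ under $G$ and $H$ together with the orthogonality of the screen distribution that annihilates them, which is why the screen real hypothesis of Definition \ref{def2} is the natural setting for the conclusion.
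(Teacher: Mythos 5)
Your proposal is correct and follows exactly the route the paper intends: its proof of Lemma \ref{remma} is the one-line assertion that the claim ``follows directly from (\ref{m1})--(\ref{e21}) and Remark \ref{rema}'', and your substitution of $Y=\xi$, $Z=PY$ with the vanishing of $u,v$ on $TM$ plus the $J_{a}$-invariance of $\mathrm{Rad}\,TM$ and its orthogonality to $S(TM)$ is precisely the bookkeeping being suppressed. Your explicit treatment of the surviving $\sigma(X)$-terms is a useful addition, since those are the only terms not killed by $u=v=0$ alone.
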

\begin{proof}
	This follows directly from (\ref{m1})--(\ref{e21}) and  Remark \ref{rema}.
\end{proof}
\noindent Next, we prove the following result.
\begin{theorem}\label{mai2}
Let $(M,g)$ be a screen real null submanifold of a normal indefinite complex contact manifold $\overline{M}$. The induced connection $\nabla$ is a metric connection if and only if $h^{s}(X,\xi)$ has no components in $J_{a}S(TM)$, for any  $X \in \Gamma(TM)$ and $\xi\in \Gamma(\mathrm{Rad}\,TM)$	
\end{theorem}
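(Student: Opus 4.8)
The plan is to characterize metricity of $\nabla$ through Theorem \ref{th1}, which (for the admissible range of $r$) reduces the problem to showing $A^{*}_{\xi}$ vanishes on $\Gamma(TM)$. By the screen decomposition \eqref{mas2}, it suffices to control $A^{*}_{\xi}X$ on both $S(TM)$ and $\mathrm{Rad}\,TM$, and to detect this I would test against a frame for $TM$. The natural pairing is the unnamed identity $\overline{g}(h^{l}(X,PY),\xi)=g(A^{*}_{\xi}X,PY)$ together with \eqref{mas17}, so that metricity of $\nabla$ is equivalent to the vanishing of $h^{l}(X,Y)$ paired against sections of $\mathrm{Rad}\,TM$. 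Concretely, $\nabla$ is metric iff $\overline{g}(h^{l}(X,Y),\xi)=0$ for all $X,Y\in\Gamma(TM)$ and $\xi\in\Gamma(\mathrm{Rad}\,TM)$; this is the quantity I want to rewrite in terms of $h^{s}$.

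The key computational step is to transfer the lightlike second fundamental form $h^{l}$ into the screen transversal one $h^{s}$ using the complex contact structure. Here I would exploit the screen real condition $J_{a}S(TM)\subset S(TM^{\perp})$ and $J_{a}\mathrm{Rad}\,TM=\mathrm{Rad}\,TM$ from Definition \ref{def2}, which lets me replace a tangential vector by $J_{a}$ of a screen transversal one and move $J_{a}$ across $\overline{g}$ via the skew-symmetry in \eqref{cm2}, \eqref{j3}. Starting from $\overline{g}(\overline{\nabla}_{X}Y,\xi)$ with $Y=PY\in\Gamma(S(TM))$, I would write $PY=J_{a}(J_{a}^{-1}PY)$-type manipulations, or more directly apply $\overline{g}((\overline{\nabla}_{X}J_{a})\xi, \cdot )$: by Lemma \ref{remma} this covariant derivative kills the $PY$-component, which provides exactly the identity that equates $\overline{g}(\overline{\nabla}_{X}J_{a}\xi,J_{a}PY)$ with $\overline{g}(J_{a}\overline{\nabla}_{X}\xi,J_{a}PY)$. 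Unwinding $\overline{\nabla}_{X}J_{a}\xi$ by the Gauss--Weingarten formulae \eqref{mas10}--\eqref{mas12} (noting $J_{a}\xi\in\Gamma(\mathrm{Rad}\,TM)\subset\Gamma(TM)$ and $J_{a}PY\in\Gamma(S(TM^{\perp}))$), the term that survives and pairs with $J_{a}PY\in S(TM^{\perp})$ is precisely the $S(TM^{\perp})$-component of $h^{s}(X,\xi)$. Thus the condition $\overline{g}(h^{l}(X,J_{a}\xi),J_{a}PY)=0$ becomes an equation saying that $h^{s}(X,\xi)$ has no component along $J_{a}S(TM)$.

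The main obstacle I anticipate is bookkeeping the transversal components cleanly: I must distinguish the $l\mathrm{tr}(TM)$, $S(TM^{\perp})$, and within the latter the $\mathcal{V}=\mathrm{Span}\{U,V\}$ part from the $J_{a}S(TM)$ part, and verify that no stray terms from $\sigma$, $h_{U}$, $h_{V}$ (all vanishing by normality) or from $\nabla^{s},D^{l}$ contaminate the identity. I would organize this by first establishing, via \eqref{w20}, that $J_{a}$ interchanges $S(TM)$ and a well-defined complementary piece of $S(TM^{\perp})$, so that pairing against $J_{a}PY$ isolates exactly the claimed component. With Lemma \ref{remma} guaranteeing the $(\overline{\nabla}_{X}J_{a})$-term contributes nothing along $PY$, the chain of equivalences closes: $A^{*}_{\xi}\equiv 0$ on $\Gamma(TM)$ iff $\overline{g}(h^{l}(X,Y),\xi)=0$ for all $Y$ iff $h^{s}(X,\xi)$ has no $J_{a}S(TM)$-component. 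Invoking Theorem \ref{th1} then yields that $\nabla$ is a metric connection under exactly this hypothesis, completing the argument.
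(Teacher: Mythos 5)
Your proposal is correct and follows essentially the same route as the paper: reduce metricity to $A^{*}_{\xi}=0$ via Theorem \ref{th1}, then combine Lemma \ref{remma} with the Gauss--Weingarten formulae, the screen real conditions and the decomposition \eqref{w20} to identify $g(A^{*}_{J_{a}\xi}X,PY)$ with $\pm\,\overline{g}(h^{s}(X,\xi),J_{a}PY)$, which vanishes for all $PY$ precisely when $h^{s}(X,\xi)$ has no $J_{a}S(TM)$-component. The only cosmetic deviation is that Lemma \ref{remma} is stated with $PY$ (not $J_{a}PY$) in the second slot, so the identity you quote with $J_{a}PY$ needs one extra (easy) check from \eqref{m1}--\eqref{e21}, or one simply moves $J_{a}$ across the metric afterwards as the paper does.
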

\begin{proof}
	Using Lemma \ref{remma}, (\ref{mas10}), (\ref{mas15}) and (\ref{w20}), we derive
	\begin{align}\label{w21}
		g(\nabla_{X}J_{a}\xi-B_{a}h^{s}(X,\xi),PY)=0,
	\end{align}
	for all $X,Y\in \Gamma(TM)$. We then see from (\ref{w21}) and (\ref{mas15}) that
	\begin{align}\label{w22}
		g(A^{*}_{J_{a}\xi}X,PY)=-g(B_{a}h^{s}(X,\xi),PY).
	\end{align}
	We see that when $h^{s}(X,\xi)$ has no components in $J_{a}S(TM)$, the right hand side of (\ref{w22}) vanishes and we end up with $g(A^{*}_{J_{a}\xi}X,PY)=0$. Thus, by the fact that $S(TM)$ is non-degenerate, we get $A^{*}_{J_{a}\xi}X=0$, for any $X \in \Gamma(TM)$ and $\xi\in \Gamma(\mathrm{Rad}\,TM)$, and the result follows from Theorem \ref{th1}, which completes the proof.
\end{proof}
\noindent A null submanifold $(M,g)$ of a semi-Riemannian manifold $\overline{M}$ is called irrotational \cite[Definition 4.4.7, p. 182]{ds2} if $\overline{\nabla}_{X}\xi\in \Gamma(TM)$, for all $X\in \Gamma(TM)$ and $\xi \in \Gamma(\mathrm{Rad}\, TM)$. Note that, for a null submanifold, this is equivalent to $h^{l}(X,\xi)=h^{s}(X,\xi)=0$, for all $X\in \Gamma(TM)$ and $\xi \in \Gamma(\mathrm{Rad}\,TM)$. Thus, we have the following result.
\begin{corollary}
	Any irrotational screen real null submanifold of a normal indefinite complex contact manifold carries an induced metric connection.
\end{corollary}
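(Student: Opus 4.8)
The plan is to read this off from Theorem~\ref{mai2} directly, since the irrotational hypothesis delivers exactly the vanishing that the theorem's criterion demands. By the definition recalled immediately before the statement, $M$ being irrotational means that $\overline{\nabla}_{X}\xi\in\Gamma(TM)$ for all $X\in\Gamma(TM)$ and $\xi\in\Gamma(\mathrm{Rad}\,TM)$, and the text already notes that for a null submanifold this is equivalent to $h^{l}(X,\xi)=h^{s}(X,\xi)=0$ on these arguments. So the starting point of the proof is the identity $h^{s}(X,\xi)=0$ for every $X\in\Gamma(TM)$ and $\xi\in\Gamma(\mathrm{Rad}\,TM)$.

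First I would observe that the vanishing $h^{s}(X,\xi)=0$ makes the screen second fundamental form trivially free of any component in $J_{a}S(TM)$: the zero section projects to zero on every subbundle, in particular on $J_{a}S(TM)$ for each $a=1,2,3$. This is precisely the hypothesis of the sufficiency (``if'') direction of Theorem~\ref{mai2}. Concretely, in the key relation $g(A^{*}_{J_{a}\xi}X,PY)=-g(B_{a}h^{s}(X,\xi),PY)$ established inside the proof of Theorem~\ref{mai2}, the right-hand side is identically zero because $h^{s}(X,\xi)=0$ kills the argument $B_{a}h^{s}(X,\xi)$; hence $A^{*}_{J_{a}\xi}X=0$ by non-degeneracy of $S(TM)$.

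Then I would apply Theorem~\ref{mai2}: since $M$ is a screen real null submanifold of a normal indefinite complex contact manifold and $h^{s}(X,\xi)$ has no component in $J_{a}S(TM)$, the induced connection $\nabla$ is a metric connection. (Equivalently, one may pass through condition (1) of Theorem~\ref{th1}, the vanishing of $A^{*}_{\xi}$, which Theorem~\ref{mai2} secures in this setting.) This yields the claim.

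I do not expect any genuine obstacle here, as the corollary is a direct specialization of Theorem~\ref{mai2}; the only point that needs to be stated explicitly, rather than computed, is the equivalence between the irrotational condition and the vanishing of $h^{l}$ and $h^{s}$ on radical arguments, which the excerpt has already recorded. The sole subtlety worth flagging is the logical one already handled above: that $h^{s}(X,\xi)=0$ \emph{automatically} has no $J_{a}S(TM)$-component, so the criterion of Theorem~\ref{mai2} is met without any further hypothesis.
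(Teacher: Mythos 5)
Your proposal is correct and follows exactly the route the paper intends: the irrotational condition gives $h^{s}(X,\xi)=0$, which trivially has no component in $J_{a}S(TM)$, so Theorem \ref{mai2} applies. This matches the paper's own (one-line) justification.
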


\begin{theorem}
	The radical distribution, $\mathrm{Rad}\, TM$, of any screen real null submanifold of a  normal indefinite complex contact manifold is integrable if and only if $B_{a}h^{s}(\xi_{1},J_{a}\xi_{2})=B_{a}h^{s}(\xi_{2},J_{a}\xi_{1})$ , for all $\xi_{1},\xi_{2}\in \Gamma(\mathrm{Rad}\,TM)$.
\end{theorem}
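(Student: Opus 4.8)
The plan is to reduce the integrability of $\mathrm{Rad}\,TM$ to the vanishing of the $S(TM)$-component of the Lie bracket $[\xi_1,\xi_2]$, and then to rewrite that component in terms of $B_a h^s$ using the identity already produced inside the proof of Theorem \ref{mai2}.

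First I would record that the induced connection $\nabla$ is torsion-free: this is immediate from the Gauss formula (\ref{mas10}) together with the symmetry of the second fundamental form, and it gives $[\xi_1,\xi_2]=\nabla_{\xi_1}\xi_2-\nabla_{\xi_2}\xi_1$ for all $\xi_1,\xi_2\in\Gamma(\mathrm{Rad}\,TM)$. Because $\mathrm{Rad}\,TM$ is a distribution on $M$, this bracket is automatically a section of $TM=\mathrm{Rad}\,TM\perp S(TM)$; hence $\mathrm{Rad}\,TM$ is integrable precisely when the bracket has vanishing $S(TM)$-part, i.e. when $g([\xi_1,\xi_2],PY)=0$ for every $Y\in\Gamma(TM)$, using that $S(TM)$ is non-degenerate and that every section of $S(TM)$ has the form $PY$.

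The central step is to express $g(\nabla_{\xi_1}\xi_2,PY)$ through $B_a h^s$. For this I invoke the relation $g(\nabla_X J_a\xi-B_a h^s(X,\xi),PY)=0$ obtained in (\ref{w21}), valid for all $X,Y\in\Gamma(TM)$ and $\xi\in\Gamma(\mathrm{Rad}\,TM)$. Applying it with $X=\xi_1$ and with $\xi$ replaced by $J_a\xi_2\in\Gamma(\mathrm{Rad}\,TM)$, and using that $J_a^2\xi_2=-\xi_2$ on $\mathrm{Rad}\,TM$---a consequence of (\ref{cm1}) and the vanishing of $u$ and $v$ on $TM$ noted in Remark \ref{rema}---I get $g(\nabla_{\xi_1}\xi_2,PY)=-g(B_a h^s(\xi_1,J_a\xi_2),PY)$. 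Swapping $\xi_1$ and $\xi_2$ and subtracting then yields
\[
g([\xi_1,\xi_2],PY)=g\!\left(B_a h^s(\xi_2,J_a\xi_1)-B_a h^s(\xi_1,J_a\xi_2),\,PY\right).
\]

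To conclude, I would verify that $B_a h^s(\xi_i,J_a\xi_j)$ in fact lies in $S(TM)$ and not merely in $TM$: pairing $J_a h^s(\xi_i,J_a\xi_j)$ with a null section $N\in\Gamma(l\mathrm{tr}(TM))$ and using the skew-symmetry of $J_a$, the invariance of $l\mathrm{tr}(TM)$, and the fact that $h^s$ is $S(TM^{\perp})$-valued shows that the $\mathrm{Rad}\,TM$-component of $B_a h^s$ vanishes. Given this, the non-degeneracy of $S(TM)$ turns $g([\xi_1,\xi_2],PY)=0$ (for all $Y$) into the genuine equality $B_a h^s(\xi_1,J_a\xi_2)=B_a h^s(\xi_2,J_a\xi_1)$, which is exactly the stated criterion, and both implications follow at once. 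I expect the only delicate point to be precisely this last bookkeeping---tracking which subbundle each projected term occupies, and in particular checking that $B_a h^s$ carries no radical part---since that is what upgrades the a priori weaker ``equal $S(TM)$-parts'' condition to the clean equality appearing in the theorem.
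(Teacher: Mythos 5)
Your proposal is correct and follows essentially the same route as the paper: substitute $J_a\xi$ for $\xi$ in the identity coming from Lemma \ref{remma} (equivalently, relation (\ref{w21})), use $J_a^2\xi=-\xi$ on $\mathrm{Rad}\,TM$, and antisymmetrize to express $g([\xi_1,\xi_2],PY)$ through $B_a h^s$. Your closing check that $B_a h^s$ carries no $\mathrm{Rad}\,TM$-component is a welcome extra step that the paper leaves implicit, but it does not change the argument.
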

\begin{proof}
	Replacing $\xi$ with $J_{a}\xi_{1}$ in Lemma \ref{remma}, and using (\ref{mas10}), (\ref{cm1}) and (\ref{w20}), we derive
	\begin{align}\label{w30}
		g(\nabla_{X}\xi_{1},PY)=-g(B_{a}h^{s}(X,J_{a}\xi_{1}),PY),
	\end{align}

for all $X,Y\in\Gamma(TM)$ and $\xi_{1}\in \Gamma(\mathrm{Rad}\,TM)$. letting $X=\xi_{2}$, where $\xi_{2}\in \Gamma(\mathrm{Rad}\,TM)$, in (\ref{w30}), we get
\begin{align}\label{w31}
		g(\nabla_{\xi_{2}}\xi_{1},PY)=-g(B_{a}h^{s}(\xi_{2},J_{a}\xi_{1}),PY).
\end{align}
It follows from that
\begin{align}\nonumber
	g([\xi_{1},\xi_{2}],PY)=g(B_{a}h^{s}(\xi_{2},J_{a}\xi_{1})-B_{a}h^{s}(\xi_{1},J_{a}\xi_{2}),PY),
\end{align}
from which our result follows.
\end{proof}
\begin{theorem}
	The screen distribution, $S(TM)$, of a screen real null submanifold of a normal indefinite complex contact manifold is integrable if and only if $-A_{J_{a}X}Y+A_{J_{a}Y}X$ has no components in $\mathrm{Rad}\,TM$, for all $X,Y\in \Gamma(S(TM))$.
\end{theorem}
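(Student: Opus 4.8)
The plan is to reduce integrability of $S(TM)$ to a condition detectable by pairing $[X,Y]$ against the null transversal bundle, and then to trade the shape operators that arise naturally for the operators $A_{J_{a}X}$ appearing in the statement. First I would observe that for $X,Y\in\Gamma(S(TM))$ the bracket $[X,Y]$ already lies in $\Gamma(TM)$, so by the decomposition $TM=\mathrm{Rad}\,TM\perp S(TM)$ the distribution $S(TM)$ is integrable precisely when $[X,Y]$ has no radical component. Since $\{N_{i}\}$ is dual to $\{\xi_{i}\}$ and $S(TM)$ is $\overline{g}$-orthogonal to $l\mathrm{tr}(TM)$ by (\ref{eq04}), this is equivalent to $\overline{g}([X,Y],N)=0$ for every $N\in\Gamma(l\mathrm{tr}(TM))$. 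Because $l\mathrm{tr}(TM)$ is invariant and $J_{a}$ is invertible on it (Remark \ref{rema}), I may equally test against $J_{a}N$; that is, integrability is equivalent to $\overline{g}([X,Y],J_{a}N)=0$ for all such $N$.

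Next I would record two auxiliary identities. Differentiating $\overline{g}(Y,N)=0$ and using the Weingarten equation (\ref{mas11}) gives $\overline{g}(\overline{\nabla}_{X}Y,N)=g(A_{N}X,Y)$, since the $\nabla^{l}_{X}N$ and $D^{s}(X,N)$ terms are orthogonal to $Y\in\Gamma(S(TM))$. The second, and the main obstacle, is to show that $\overline{g}((\overline{\nabla}_{X}J_{a})Y,N)=0$ for all $X,Y\in\Gamma(S(TM))$ and $N\in\Gamma(l\mathrm{tr}(TM))$. For this I would feed $Z=N$ into (\ref{m1})--(\ref{e21}) and check that every term dies: the coefficients $u(X),v(X),u(Y),v(Y)$ vanish because $u,v$ annihilate $TM$ (Remark \ref{rema}), the coefficients $u(N),v(N)$ vanish because $U,V$ span $\mathcal{V}\subset S(TM^{\perp})$ which is $\overline{g}$-orthogonal to $l\mathrm{tr}(TM)$, and the remaining terms of type $\overline{g}(GY,N)$, $\overline{g}(HY,N)$ vanish because $G(TM),H(TM)\subset S(TM^{\perp})$ by Lemma \ref{impo}. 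The careful bookkeeping here is the delicate part, since the three structure equations differ and must be handled separately.

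Finally I would combine these. Writing $\overline{\nabla}_{Y}(J_{a}X)=(\overline{\nabla}_{Y}J_{a})X+J_{a}\overline{\nabla}_{Y}X$, applying the Weingarten equation (\ref{mas12}) to $J_{a}X\in\Gamma(S(TM^{\perp}))$ to isolate the tangential part, and then using the second identity together with the skew-adjointness of $J_{a}$ (from (\ref{cm2}) and (\ref{j3})) and the $J_{a}$-invariance of $l\mathrm{tr}(TM)$, I expect to obtain $\overline{g}(A_{J_{a}X}Y,N)=g(A_{J_{a}N}Y,X)$, and symmetrically $\overline{g}(A_{J_{a}Y}X,N)=g(A_{J_{a}N}X,Y)$. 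Subtracting and invoking the first identity yields $\overline{g}(-A_{J_{a}X}Y+A_{J_{a}Y}X,N)=g(A_{J_{a}N}X,Y)-g(A_{J_{a}N}Y,X)=\overline{g}([X,Y],J_{a}N)$. The theorem then follows from the reformulation in the first step: the left-hand side vanishes for all $N\in\Gamma(l\mathrm{tr}(TM))$ exactly when $-A_{J_{a}X}Y+A_{J_{a}Y}X$ has no component in $\mathrm{Rad}\,TM$, while the right-hand side vanishes for all such $N$ exactly when $S(TM)$ is integrable.
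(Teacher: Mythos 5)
Your proposal is correct and follows essentially the same route as the paper: reduce integrability to the vanishing of $\overline{g}([X,Y],J_{a}N)$, establish $\overline{g}((\overline{\nabla}_{X}J_{a})Y,N)=0$ from (\ref{m1})--(\ref{e21}) and Remark \ref{rema}, and then apply the Weingarten formula (\ref{mas12}) to $J_{a}Y\in\Gamma(S(TM^{\perp}))$ before antisymmetrizing. The only (harmless) difference is that you insert an extra Weingarten step to rewrite $\overline{g}(\nabla_{X}Y,J_{a}N)$ as $g(A_{J_{a}N}X,Y)$, whereas the paper works with $\overline{g}(\nabla_{X}Y,J_{a}N)$ directly.
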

\begin{proof}
	Using (\ref{m1})--(\ref{e21}) and Remark \ref{rema}, we get
	\begin{align}
		\overline{g}((\overline{\nabla}_{X}J_{a})Y,N)=0,\;\; a=1,2,3,
	\end{align}
	for all $X,Y\in \Gamma(S(TM))$ and $N\in \Gamma(l\mathrm{tr}(TM))$. Then, applying (\ref{mas10}), (\ref{mas12}) and (\ref{w20}), we derive
	$\overline{g}(\nabla_{X}Y,J_{a}N)=\overline{g}(A_{J_{a}Y}X,N)$, from which we get
	\begin{align}\label{w32}
		\overline{g}([X,Y],J_{a}N)=\overline{g}(A_{J_{a}Y}X-A_{J_{a}X}Y,N),
	\end{align}
	Then, our result follows from (\ref{w32}) and the fact that $l\mathrm{tr}(TM)$ is invariant with respect to $J_{a}$, and the proof is complete.
\end{proof}
\noindent We also have the following result.
\begin{corollary}
	The screen distribution, $S(TM)$, of a screen real null submanifold of a normal indefinite complex contact manifold is integrable if and only if $A_{J_{a}X}Y=A_{J_{a}Y}X$, for all $X,Y\in \Gamma(S(TM))$.
\end{corollary}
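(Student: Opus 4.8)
The plan is to derive the corollary from the preceding theorem by showing that the extra information it carries is exactly the vanishing of the \emph{screen} part of $A_{J_{a}X}Y-A_{J_{a}Y}X$. Since $A_{J_{a}X}Y-A_{J_{a}Y}X\in\Gamma(TM)=\Gamma(\mathrm{Rad}\,TM)\perp\Gamma(S(TM))$, the preceding theorem already tells us that $S(TM)$ is integrable if and only if this vector has no $\mathrm{Rad}\,TM$-component. Hence it suffices to prove that its $S(TM)$-component always vanishes, for then the two conditions ``no radical component'' and ``the vector is zero'' coincide, so integrability becomes equivalent to $A_{J_{a}X}Y=A_{J_{a}Y}X$.

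To isolate the screen part I would pair with an arbitrary $Z\in\Gamma(S(TM))$ and evaluate $g(A_{J_{a}X}Y,Z)$ using (\ref{mas13}). Since $J_{a}X\in\Gamma(S(TM^{\perp}))$ for $X\in\Gamma(S(TM))$ by Definition \ref{def2}, and $D^{l}(Y,J_{a}X)\in\Gamma(l\mathrm{tr}(TM))$ is orthogonal to $Z\in\Gamma(S(TM))$, the $D^{l}$-term in (\ref{mas13}) drops out and I obtain
\begin{align}\nonumber
g(A_{J_{a}X}Y,Z)=\overline{g}(h^{s}(Y,Z),J_{a}X),\qquad g(A_{J_{a}Y}X,Z)=\overline{g}(h^{s}(X,Z),J_{a}Y),
\end{align}
for all $X,Y,Z\in\Gamma(S(TM))$. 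Thus the screen component of $A_{J_{a}X}Y-A_{J_{a}Y}X$ is governed by the trilinear form $P(X,Y,Z):=\overline{g}(h^{s}(Y,Z),J_{a}X)$, and the whole problem reduces to verifying that $P$ is symmetric under the interchange of $X$ and $Y$.

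The key step, and the one I expect to require the most care, is this symmetry. I would first note that $P$ is symmetric in its last two arguments, which is immediate from the symmetry of $h^{s}$. Next, writing $\overline{g}(h^{s}(Y,Z),J_{a}X)=\overline{g}(\overline{\nabla}_{Y}Z,J_{a}X)$ via (\ref{mas10}), moving the connection onto $J_{a}X$ through $\overline{\nabla}_{Y}(J_{a}X)=(\overline{\nabla}_{Y}J_{a})X+J_{a}\overline{\nabla}_{Y}X$, and using the skew-symmetry of $J_{a}$, yields $P(X,Y,Z)=P(Z,X,Y)-\overline{g}((\overline{\nabla}_{Y}J_{a})X,Z)$. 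The correction term vanishes because $\overline{g}((\overline{\nabla}_{Z}J_{a})X,Y)=0$ for all $X,Y,Z\in\Gamma(S(TM))$: feeding three screen fields into (\ref{m1}), (\ref{e20}) and (\ref{e21}) annihilates every term carrying a factor $u$ or $v$ (these $1$-forms vanish on $TM$ by Remark \ref{rema}), while any surviving $\sigma$-term is of the form $\sigma(\cdot)\,\overline{g}(J_{b}X,Y)$ with $b\neq a$, hence zero since $J_{b}X\in\Gamma(S(TM^{\perp}))$ is orthogonal to $Y\in\Gamma(S(TM))$. Consequently $P(X,Y,Z)=P(Z,X,Y)$; combined with its symmetry in the last two slots, $P$ is cyclically invariant and therefore fully symmetric, and in particular symmetric in $X$ and $Y$.

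Finally I would assemble the pieces. The symmetry of $P$ gives $g(A_{J_{a}X}Y,Z)=g(A_{J_{a}Y}X,Z)$ for every $Z\in\Gamma(S(TM))$, so, $S(TM)$ being non-degenerate, $A_{J_{a}X}Y-A_{J_{a}Y}X$ has no screen component. The preceding theorem then makes integrability of $S(TM)$ equivalent to the additional vanishing of its radical component, that is, to $A_{J_{a}X}Y=A_{J_{a}Y}X$ for all $X,Y\in\Gamma(S(TM))$, which is precisely the assertion of the corollary.
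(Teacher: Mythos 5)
Your argument is correct, and it actually supplies something the paper omits: the corollary is stated immediately after the theorem with no proof at all, and it is strictly stronger than the theorem, since the theorem (via (\ref{w32})) only controls the $\mathrm{Rad}\,TM$-component of $A_{J_{a}Y}X-A_{J_{a}X}Y$. The ``if'' direction is immediate from (\ref{w32}), but the ``only if'' direction requires exactly what you prove, namely that the screen component of $A_{J_{a}X}Y-A_{J_{a}Y}X$ vanishes identically. Your route to this --- reducing to the trilinear form $P(X,Y,Z)=\overline{g}(h^{s}(Y,Z),J_{a}X)$ via (\ref{mas13}) (the $D^{l}$-term correctly drops since $l\mathrm{tr}(TM)\perp S(TM)$), and then establishing full symmetry of $P$ from its symmetry in the last two slots plus cyclic invariance --- is sound. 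The key vanishing $\overline{g}((\overline{\nabla}_{Y}J_{a})X,Z)=0$ for screen arguments checks out against (\ref{m1})--(\ref{e21}): the $u$- and $v$-terms die by Remark \ref{rema}, and the residual $\sigma(X)\overline{g}(HY,Z)$ and $-\sigma(X)\overline{g}(GY,Z)$ terms vanish because $GY,HY\in\Gamma(S(TM^{\perp}))$ by Definition \ref{def2}. The only stylistic caution is that your intermediate identity should read $P(X,Y,Z)=P(Z,Y,X)-\overline{g}((\overline{\nabla}_{Y}J_{a})X,Z)$ before invoking symmetry in the last two slots, but this is harmless. In short: same starting point as the paper (equation (\ref{w32})), but you close a genuine logical gap that the paper glosses over, and the symmetry-of-$P$ argument is the honest content of the corollary.
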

\noindent A null submanifold $(M,g)$, of a semi-Riemannian manifold $(\overline{M},\overline{g})$ is said to be totally umbilic in $\overline{M}$ \cite{ds2} if there is a smooth transversal vector field $\mathcal{H}\in \Gamma(\mathrm{tr}(TM))$, called the transversal curvature vector of $M$ such that $h(X,Y)=\mathcal{H}g(X,Y)$, for all  $X,Y\in \Gamma(TM)$. Moreover, it is easy to see that $M$ is totally umbilic in $\overline{M}$ if and only if on each coordinate neighborhood $\mathscr{U}$ there exists smooth vector fields $H^l\in\Gamma(l\mathrm{tr}(TM))$ and $H^s\in\Gamma(S(TM^\perp))$  such that, for all $X,Y\in\Gamma(TM)$,
  \begin{align}\label{w40}
   h^l(X,Y)=H^l g(X,Y),\;\;\; h^s(X,Y)=H^s g(X,Y).
  \end{align}
  \begin{lemma}\label{lemma3}
  	On any totally umbilic screen real null submanifold of a normal indefinite complex contact manifold, we have $\overline{g}(H^{s},U)=\overline{g}(H^{s},V)=0$.
 \end{lemma}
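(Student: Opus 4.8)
The plan is to feed the totally umbilic hypothesis into the identity already isolated in the proof of Lemma~\ref{impo}. Recall that for a screen real null submanifold one has $\mathcal{V}\subset S(TM^{\perp})$, so that $U$ and $V$ are screen transversal and the $1$-forms $u,v$ vanish on $TM$ (Remark~\ref{rema}). First I would revisit the computation leading to (\ref{u2}): using (\ref{cm4}), the vanishing $h_{U}=0$ guaranteed by normality, and the Gauss formula (\ref{mas10}), one obtains $\overline{g}(GX,Y)=\overline{g}(U,h^{s}(X,Y))$ for all $X,Y\in\Gamma(TM)$, and Lemma~\ref{impo} already shows that the left-hand side is zero. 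Running the parallel argument with (\ref{cm5}) and $h_{V}=0$ gives $\overline{g}(HX,Y)=\overline{g}(V,h^{s}(X,Y))=0$. Thus $\overline{g}(U,h^{s}(X,Y))=\overline{g}(V,h^{s}(X,Y))=0$ for all $X,Y\in\Gamma(TM)$.

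Next I would substitute the umbilic expression (\ref{w40}), namely $h^{s}(X,Y)=H^{s}g(X,Y)$, into these two relations. This yields $\overline{g}(H^{s},U)\,g(X,Y)=0$ and $\overline{g}(H^{s},V)\,g(X,Y)=0$ for all $X,Y\in\Gamma(TM)$. To finish, I would exploit the non-degeneracy of the screen distribution $S(TM)$: choosing $X=Y\in\Gamma(S(TM))$ with $g(X,X)\ne 0$ forces $\overline{g}(H^{s},U)=\overline{g}(H^{s},V)=0$, which is precisely the claim.

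The argument is essentially a one-line consequence of the totally umbilic condition once (\ref{u2}) and its $H$-analogue are in hand, so I do not expect a serious obstacle. The only point demanding minor care is the final step: since $g$ is a null (degenerate) metric on $TM$, one cannot pass from $\overline{g}(H^{s},U)\,g(X,Y)=0$ to $\overline{g}(H^{s},U)=0$ directly, and must first restrict to $S(TM)$, where $g$ is non-degenerate and a vector of non-zero length is available.
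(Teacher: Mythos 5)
Your proof is correct and follows essentially the same strategy as the paper's: establish that $h^{s}(X,Y)$ is orthogonal to $U$ and $V$, substitute the umbilic expression $h^{s}(X,Y)=H^{s}g(X,Y)$, and invoke the non-degeneracy of $S(TM)$ to cancel the factor $g(X,Y)$. The only (harmless) difference is in how the key orthogonality is obtained: the paper pairs the Weingarten equation (\ref{mas12}) for $W=U$ with (\ref{cm4}) to get $\overline{g}(A_{U}PX,PY)=0$ and then converts via (\ref{mas13}), whereas you recycle the identity (\ref{u2}) from the proof of Lemma \ref{impo} and its symmetry/skew-symmetry consequence — a slightly more economical route to the same relation $\overline{g}(h^{s}(X,Y),U)=\overline{g}(h^{s}(X,Y),V)=0$.
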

 \begin{proof}
 	From (\ref{mas12}) and (\ref{cm4}), we have
 	\begin{align}\label{oo23}
 		-GX+\sigma(X)V=-A_{U}X+\nabla_{X}^{s}U+D^{l}(X,U),
 	\end{align}
for any $X\in \Gamma(TM)$. Taking the product of (\ref{oo23}) with $PY$, where $Y\in \Gamma(TM)$, we get $\overline{g}(A_{U}PX,PY)=0$. It then follows from (\ref{mas13}) that $\overline{g}(h^{s}(PX,PY),U)=0$. Since $M$ is totally umbilic, we have $g(PX,PY)\overline{g}(H^{s},U)=0$. That is, $\overline{g}(H^{s},U)=0$ since $S(TM)$ is nondegenerate. Finally, $\overline{g}(H^{s},V)=0$ follow from (\ref{cm5}) and similar steps as before.
 \end{proof}
 \noindent For a totally umbilic null submanifold $M$, we see from (\ref{w40}) that $h^{l}(X,\xi)=h^{s}(X,\xi)=0$, for all $X\in \Gamma(TM)$ and $\xi\in \Gamma(\mathrm{Rad}\,TM)$. Thus, using this fact and Theorem \ref{mai2}, we state the following result.
\begin{theorem}
	Any totally umbilic screen real null submanifold of a normal indefinite complex contact manifold has an induced metric connection. Moreover,
	\begin{enumerate}
		\item $H^{s}\in \Gamma(S(TM^{\perp}))$.
		\item $M$ is either totally geodesic or $\dim S(TM)=1$.
	\end{enumerate}
\end{theorem}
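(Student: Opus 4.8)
The plan is to establish metricity first, then strip off the $l$-transversal part of the mean curvature, and finally analyze $H^{s}$ through a single bilinear identity. I would begin by recording that total umbilicity forces $h^{l}(X,\xi)=h^{s}(X,\xi)=0$ for all $X\in\Gamma(TM)$ and $\xi\in\Gamma(\mathrm{Rad}\,TM)$: since $\mathrm{Rad}\,TM\subset TM^{\perp}$ we have $g(X,\xi)=0$, so (\ref{w40}) gives $h^{l}(X,\xi)=H^{l}g(X,\xi)=0$ and likewise $h^{s}(X,\xi)=0$. In particular $h^{s}(X,\xi)$ has no component in $J_{a}S(TM)$, so Theorem \ref{mai2} immediately yields that $\nabla$ is a metric connection. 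For assertion (1) I would feed this into (\ref{mas17}): as $(\nabla_{X}g)(Y,Z)=0$, substituting $h^{l}(X,Y)=H^{l}g(X,Y)$ gives $g(X,Y)\overline{g}(H^{l},Z)+g(X,Z)\overline{g}(H^{l},Y)=0$. Taking $Z=\xi\in\Gamma(\mathrm{Rad}\,TM)$ and $Y\in\Gamma(S(TM))$, and using $g(X,\xi)=0$ together with $\overline{g}(H^{l},Y)=0$ (as $H^{l}\in l\mathrm{tr}(TM)$ pairs trivially with $S(TM)$), leaves $g(X,Y)\overline{g}(H^{l},\xi)=0$; non-degeneracy of $S(TM)$ forces $\overline{g}(H^{l},\xi)=0$ for every $\xi$, hence $H^{l}=0$ and the transversal curvature vector reduces to $H^{s}\in\Gamma(S(TM^{\perp}))$.

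The core of (2) is a single identity obtained by computing $\overline{\nabla}_{X}(J_{a}Y)$ for $X,Y\in\Gamma(S(TM))$ in two ways. Since $J_{a}Y\in\Gamma(S(TM^{\perp}))$, the Weingarten formula (\ref{mas12}) shows its $S(TM)$-component is $-A_{J_{a}Y}X$, so pairing with $Z'\in\Gamma(S(TM))$ gives $\overline{g}(\overline{\nabla}_{X}(J_{a}Y),Z')=-g(A_{J_{a}Y}X,Z')$. Writing instead $\overline{\nabla}_{X}(J_{a}Y)=(\overline{\nabla}_{X}J_{a})Y+J_{a}\overline{\nabla}_{X}Y$, the structure equations (\ref{m1})--(\ref{e21}) evaluated on $X,Y,Z'\in\Gamma(S(TM))$—where $u,v$ vanish and $\overline{g}(J_{a}Y,Z')=0$—make $\overline{g}((\overline{\nabla}_{X}J_{a})Y,Z')=0$, while skew-symmetry of $J_{a}$ and $h^{l}\equiv0$ give $\overline{g}(J_{a}\overline{\nabla}_{X}Y,Z')=-g(X,Y)\overline{g}(H^{s},J_{a}Z')$. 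Combining these yields $g(A_{J_{a}Y}X,Z')=g(X,Y)\overline{g}(H^{s},J_{a}Z')$. Pairing this with (\ref{mas13}), which gives $g(A_{J_{a}Y}X,Z')=g(X,Z')\overline{g}(H^{s},J_{a}Y)$ since $\overline{g}(Z',D^{l}(X,J_{a}Y))=0$, and using non-degeneracy of $S(TM)$ in $X$, I obtain the vector identity
\[
  \overline{g}(H^{s},J_{a}Z')\,Y=\overline{g}(H^{s},J_{a}Y)\,Z',\qquad \forall\,Y,Z'\in\Gamma(S(TM)),\ a=1,2,3.
\]

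From this the dichotomy reads off directly. If $\overline{g}(H^{s},J_{a}Y_{0})\ne0$ for some $a$ and some $Y_{0}$, the identity forces every $Z'\in\Gamma(S(TM))$ to be a scalar multiple of $Y_{0}$, whence $\dim S(TM)=1$. Otherwise $\overline{g}(H^{s},J_{a}Y)=0$ for all $Y$ and all $a$; combined with $\overline{g}(H^{s},U)=\overline{g}(H^{s},V)=0$ from Lemma \ref{lemma3}, this makes $H^{s}$ orthogonal to $\mathcal{V}$ and to each $J_{a}S(TM)$. Since on a screen real submanifold the screen transversal bundle splits as $S(TM^{\perp})=\mathcal{V}\perp J_{1}S(TM)\perp J_{2}S(TM)\perp J_{3}S(TM)$—the three images being mutually orthogonal and orthogonal to $\mathcal{V}$, by Lemma \ref{lemma11} and the vanishing of $u,v$ on $TM$—non-degeneracy gives $H^{s}=0$, so $h=0$ and $M$ is totally geodesic.

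I expect the delicate points to be twofold. First, checking that the $(\overline{\nabla}_{X}J_{a})Y$ term vanishes requires running (\ref{m1}), (\ref{e20}) and (\ref{e21}) separately: for $J_{1}=J$ every term carries a factor $u(X)$ or $v(X)$ and dies, while for $J_{2}=G$ and $J_{3}=H$ one must discard the surviving $\sigma(X)\overline{g}(HY,Z')$ and $\sigma(X)\overline{g}(GY,Z')$ terms, which vanish because $GY,HY\in\Gamma(S(TM^{\perp}))$ are orthogonal to $Z'\in\Gamma(S(TM))$. Second, closing the argument needs the screen transversal bundle to be spanned by $\mathcal{V}$ together with the three images $J_{a}S(TM)$, so that $H^{s}$ has no room to hide in a leftover $J_{a}$-invariant summand of $S(TM^{\perp})$; this is precisely where the quaternionic action of $J,G,H$ on the horizontal bundle $\mathcal{H}$ (Lemma \ref{lemma11}) must be invoked, and it is the step I would scrutinize most carefully.
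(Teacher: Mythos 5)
Your treatment of the metric connection claim and your derivation of the identity $g(X,Y)\overline{g}(H^{s},J_{a}Z')=g(X,Z')\overline{g}(H^{s},J_{a}Y)$ for $X,Y,Z'\in\Gamma(S(TM))$ are both correct, and the latter is just the polarized form of the paper's relation (\ref{las1}) combined with umbilicity; the branch yielding $\dim S(TM)=1$ is fine. The gap is in the totally geodesic branch. You conclude $H^{s}=0$ from $\overline{g}(H^{s},J_{a}S(TM))=0$ together with Lemma \ref{lemma3} by asserting the splitting $S(TM^{\perp})=\mathcal{V}\perp J_{1}S(TM)\perp J_{2}S(TM)\perp J_{3}S(TM)$. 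That splitting is not justified and fails in general: it would force $\dim S(TM^{\perp})=2+3\dim S(TM)$, i.e. $\dim S(TM)=n-r/2$, which is nowhere assumed. In general $S(TM^{\perp})$ contains a further nondegenerate $J_{a}$-invariant summand (the paper's $\omega$, the analogue of the $D_{0}$ appearing in Section \ref{main33}), and nothing in your argument prevents $H^{s}$ from living there.

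The missing ingredient is precisely what the paper proves as its part (1). You read (1) as the assertion that the transversal curvature vector has no $l\mathrm{tr}(TM)$-component and prove $H^{l}=0$ correctly via (\ref{mas17}); but the content of the paper's part (1) is the different and, for part (2), indispensable statement that $\overline{g}(H^{s},W)=0$ for every $W$ in the invariant complement $\omega$ of $J_{a}S(TM)$ in $S(TM^{\perp})$. The paper gets this from $\overline{g}((\overline{\nabla}_{PX}J_{a})PY,J_{a}W)=0$, which yields $\overline{g}(H^{s},J_{a}W)=-\overline{g}(H^{s},W)$ and, after replacing $W$ by $J_{a}W$ and invoking Lemma \ref{lemma3}, gives $\overline{g}(H^{s},W)=0$, i.e. $H^{s}\in\Gamma(J_{a}S(TM))$. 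With that in hand, your vanishing of $\overline{g}(H^{s},J_{a}Y)$ for all $Y\in\Gamma(S(TM))$ does force $H^{s}=0$ by nondegeneracy of $J_{a}S(TM)$. So your dichotomy argument is salvageable, but only after restoring this orthogonality-to-$\omega$ step; you correctly identified this as the point to scrutinize, and it is exactly where the proof as written breaks.
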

\begin{proof}
	The first part of theorem is obvious. However, for the two parts (1) and (2), we let $\omega$ be the complementary vector bundle to $J_{a}S(TM)$ in $S(TM^{\perp})$. Then we can write $S(TM^{\perp})=J_{a}S(TM)\perp \omega$, for all $a=1,2,3$. Since $J_{a}(S(TM))$ and $S(TM^{\perp})$ are nondegenerate, it follows that $\omega$ is also nondegenerate. Moreover, it is easy to see that $\omega$ is invariant. Then, it means that the characteristic subbundle $\mathcal{V}\subseteq\omega$. Next, from (\ref{m1})--(\ref{e21}) and Lemma \ref{lemma11}, we see that $\overline{g}((\overline{\nabla}_{PX}J_{a})PY,J_{a}W)=0$, for all $X,Y\in \Gamma(TM)$ and $W\in \Gamma(\omega)$. It then follows from (\ref{mas10}) and (\ref{mas12}) that
\begin{align}\label{las}
		\overline{g}(h^{s}(PX,PY),J_{a}W)=-\overline{g}(C_{a}h^{s}(PX,PY),J_{a}W).	
    \end{align}
 As $M$ is totally umbilic,  relation (\ref{las}) reduce to
 \begin{align}\nonumber
 	g(PX,PY)\overline{g}(H^{s},J_{a}W)=-g(PX,PY)\overline{g}(H^{s},W),
 \end{align}
  which give $\overline{g}(H^{s},J_{a}W)=-\overline{g}(H^{s},W)$, since $S(TM)$ is nondegenerate. Hence, replacing $W$ with $J_{a}W$ and using Lemma \ref{lemma3}, we get $\overline{g}(H^{s},J_{a}W)=\overline{g}(H^{s},W)$. So, $\overline{g}(H^{s},W)=0$, and hence $H^{s}\in \Gamma(S(TM^{\perp}))$, which proves (1). For (2), we first note, from (\ref{m1})--(\ref{e21}), that $\overline{g}((\overline{\nabla}_{X}J_{a})X,Y)=0$, for all $X,Y\in \Gamma(S(TM))$. It then follows from this last relation, together with (\ref{mas10}) and  (\ref{mas12}) that
  \begin{align}\label{las1}
  	\overline{g}(h^{s}(X,X),J_{a}Y)=\overline{g}(h^{s}(X,Y),J_{a}X).  \end{align}
  The rest of the proof follows easily as in \cite[Theorem 8.3.10, p. 372]{ds2}, starting from (\ref{las1}), which proves (2). This completes the proof.
\end{proof}

\section{Screen transversal anti-invariant submanifolds}\label{main33}

Using similar language as in \cite[Definition 3.2, p. 320]{sahin}, together with Lemma \ref{impo}, we define  screen transversal anti-invariant submanifolds as follows.
\begin{definition}\label{def4}
	\rm{
	Let $(M,g)$  be a null submanifold of an indefinite complex contact manifold $\overline{M}$, which is transversal to the characteristic subbundle $\mathcal{V}$, that is $\mathcal{V}\subset S(TM^{\perp})$. Then, $M$ is called a screen transversal anti-invariant  null submanifold if $J_{a}\mathrm{Rad}\,TM\subset S(TM^{\perp})$ and $J_{a}S(TM)\subset S(TM^{\perp})$, for all $a=1,2,3$.
	}
\end{definition}
\noindent It then follows from Definition \ref{def4} that $J_{a}l\mathrm{tr}(TM)\subset S(TM^{\perp})$. Moreover, $S(TM^{\perp})$ decomposes as follows;
\begin{align}\nonumber
	S(TM^{\perp})=\{ J_{a}\mathrm{Rad}\,TM\oplus J_{a}l \mathrm{tr}(TM)\} \perp J_{a}S(TM)\perp D_{0},
\end{align}
where $D_{0}$ is a nondegenerate and $J_{a}$-invariant distribution.  Then,  for any $W\in \Gamma(S(TM^{\perp}))$, we have
\begin{align}\label{u12}
	J_{a}W=B_{a}W+C_{a}W,
\end{align}
where $B_{a}W$ and $C_{a}W$ are the tangemtial and transversal parts of $J_{a}W$.
\begin{theorem}
	The induced connection $\nabla$ on a screen transversal anti-invariant null submanifold of a normal indefinite complex contact manifold is a metric connection if and only if  $\nabla_{X}^{s}J_{a}\xi$ has no components in $J_{a}(S(TM))$, for all $X\in \Gamma(TM)$ and $\xi \in \Gamma(\mathrm{Rad}\, TM)$.
\end{theorem}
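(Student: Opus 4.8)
The plan is to reduce the statement to condition (1) of Theorem \ref{th1}, namely that $A^*_\xi$ vanishes on $\Gamma(TM)$ for every $\xi\in\Gamma(\mathrm{Rad}\,TM)$, and then to pair the $J_aS(TM)$-component of $\nabla^s_X J_a\xi$ against $A^*_\xi$. First I would record, from the Gauss formula (\ref{mas10}) together with (\ref{mas15}), that for $X\in\Gamma(TM)$, $\xi\in\Gamma(\mathrm{Rad}\,TM)$ and $PY\in\Gamma(S(TM))$,
\begin{align*}
\overline{g}(\overline{\nabla}_X\xi,PY)=-g(A^*_\xi X,PY),
\end{align*}
since $h^l(X,\xi)\in\Gamma(l\mathrm{tr}(TM))$, $h^s(X,\xi)\in\Gamma(S(TM^\perp))$ and the $\mathrm{Rad}\,TM$-part $\nabla^{*t}_X\xi$ are all orthogonal to $PY$. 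Because $S(TM)$ is nondegenerate, $A^*_\xi=0$ is equivalent to $g(A^*_\xi X,PY)=0$ for all such $X$ and $Y$.

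Next I would detect the $J_aS(TM)$-component of $\nabla^s_X J_a\xi$ by pairing with $J_aPY$, which is legitimate because the decomposition $S(TM^\perp)=\{J_a\mathrm{Rad}\,TM\oplus J_a l\mathrm{tr}(TM)\}\perp J_aS(TM)\perp D_0$ is orthogonal and $J_aS(TM)$ is nondegenerate (as $J_a$ restricts to a $\overline{g}$-isometry on $S(TM)$, where $u$ and $v$ vanish). Writing $W:=J_a\xi\in\Gamma(S(TM^\perp))$ and using the Weingarten formula (\ref{mas12}), the parts $-A_WX\in\Gamma(TM)$ and $D^l(X,W)\in\Gamma(l\mathrm{tr}(TM))$ are orthogonal to $J_aPY$, so $\overline{g}(\nabla^s_X J_a\xi,J_aPY)=\overline{g}(\overline{\nabla}_X(J_a\xi),J_aPY)$. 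Splitting $\overline{\nabla}_X(J_a\xi)=(\overline{\nabla}_X J_a)\xi+J_a\overline{\nabla}_X\xi$ and using $\overline{g}(J_a\overline{\nabla}_X\xi,J_aPY)=\overline{g}(\overline{\nabla}_X\xi,PY)=-g(A^*_\xi X,PY)$ (the isometry correction terms vanishing since $u(PY)=v(PY)=0$), everything reduces to the single identity $\overline{g}((\overline{\nabla}_X J_a)\xi,J_aPY)=0$ for $a=1,2,3$.

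Establishing this identity is the main obstacle and the only genuine computation. I would substitute $Y=\xi$ and $Z=J_aPY$ into (\ref{m1})--(\ref{e21}). Every term carrying $u(X)$ or $v(X)$ vanishes, because $\mathcal{V}\subset S(TM^\perp)$ forces $u$ and $v$ to vanish on $TM$, and the terms with $u(\xi),v(\xi)$ vanish since $\xi$ is tangent while $U,V$ are transversal. The only survivors are the $\sigma(X)$-terms: $\sigma(X)\overline{g}(H\xi,GPY)$ in the $G$-case and $-\sigma(X)\overline{g}(G\xi,HPY)$ in the $H$-case, the $J$-case having none. I would annihilate these using Lemma \ref{lemma11}: skew-symmetry gives $\overline{g}(H\xi,GPY)=-\overline{g}(\xi,HGPY)$, and $HG=J+u\otimes V-v\otimes U$ with $u(PY)=v(PY)=0$ gives $HGPY=JPY\in\Gamma(JS(TM))\subset\Gamma(S(TM^\perp))$; since $\xi\in\Gamma(TM)$ and $S(TM^\perp)\subset TM^\perp$, this pairing is zero, and the $H$-case is symmetric via $GH=-(J+u\otimes V-v\otimes U)$. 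The relations $GU=GV=HU=HV=0$ coming from (\ref{j3}) are used to discard the residual coefficients $u(J_aPY),v(J_aPY)$.

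Granting the identity $\overline{g}((\overline{\nabla}_X J_a)\xi,J_aPY)=0$, I obtain $\overline{g}(\nabla^s_X J_a\xi,J_aPY)=-g(A^*_\xi X,PY)$ for all $X,Y\in\Gamma(TM)$. Hence $\nabla^s_X J_a\xi$ has no component in $J_aS(TM)$ precisely when the left-hand side vanishes for every $PY$, which by nondegeneracy of $S(TM)$ is equivalent to $A^*_\xi X=0$ for all $X$ and $\xi$. Condition (1) of Theorem \ref{th1} then converts this into the assertion that $\nabla$ is a metric connection, which closes the equivalence.
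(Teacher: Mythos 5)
Your proof is correct and follows essentially the same route as the paper: both reduce the statement to condition (1) of Theorem \ref{th1} ($A^{*}_{\xi}=0$) and use the normality identities (\ref{m1})--(\ref{e21}) together with the Gauss--Weingarten formulas to equate $g(A^{*}_{\xi}X,PY)$ with the pairing of $\nabla^{s}_{X}J_{a}\xi$ against $J_{a}S(TM)$. The only (cosmetic) difference is that you evaluate the normality formulas at $(Y,Z)=(\xi,J_{a}PY)$ and carry the $J_{a}$ onto the screen slot, whereas the paper evaluates at $(Y,Z)=(J_{a}\xi,PY)$ and uses $J_{a}^{2}\xi=-\xi$; the two computations coincide after one application of the skew-symmetry of $J_{a}$.
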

\begin{proof}
	First note that, on any screen transversal anti-invariant null submanifold $M$, the 1-forms $u$ and $v$ vanishes on $TM$. Thus, letting $Y=J_{a}\xi$ in (\ref{m1})-(\ref{e21}), we get
	\begin{align}\label{u10}
		\overline{g}((\overline{\nabla}_{X}J_{a})J_{a}\xi,PY)=0,
	\end{align}
	for all $X,Y\in \Gamma(TM)$. Applying (\ref{cm1}), (\ref{mas10}) and (\ref{mas12}) to (\ref{u10}), we get
	\begin{align}\label{u11}
		g(\nabla_{X}\xi-J_{a}A_{J_{a}\xi}X+J_{a}D^{l}(X,J_{a}\xi)+J_{a}\nabla^{s}_{X}J_{a}\xi,PY)=0.
	\end{align}
	Considering (\ref{u11}), (\ref{u12}) and (\ref{mas15}), we get
	\begin{align}\label{u13}
		g(A^{*}_{\xi}X,PY)=g(B_{a}\nabla^{s}_{X}J_{a}\xi,PY).
	\end{align}
	Finally, our result follows from (\ref{u13})  and Theorem \ref{th1}.
\end{proof}
\begin{theorem}
	The radical distribution, $\mathrm{Rad}\, TM$, of any screen transversal anti-invariant null submanifold of a normal indefinite complex contact manifold is integrable if and only if $B_{a}\nabla^{s}_{\xi_{1}}J_{a}\xi_{2}=B_{a}\nabla^{s}_{\xi_{2}}J_{a}\xi_{1}$ , for all $\xi_{1},\xi_{2}\in \Gamma(\mathrm{Rad}\,TM)$.
\end{theorem}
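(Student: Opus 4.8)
The plan is to reduce the integrability of $\mathrm{Rad}\,TM$ to a condition on the $S(TM)$-projection of the Lie bracket, and then to feed in the identity already extracted in the proof of the preceding theorem. Since $\xi_{1},\xi_{2}$ are tangent to $M$, their bracket $[\xi_{1},\xi_{2}]$ is again tangent, so by the orthogonal splitting (\ref{mas2}) it lies in $\Gamma(\mathrm{Rad}\,TM)$ precisely when its $S(TM)$-component vanishes; because $S(TM)$ is non-degenerate, this is equivalent to $g([\xi_{1},\xi_{2}],PY)=0$ for every $Y\in\Gamma(TM)$. Moreover, since the second fundamental form is symmetric, the Gauss formula (\ref{mas10}) shows that the induced connection $\nabla$ is torsion-free, whence $[\xi_{1},\xi_{2}]=\nabla_{\xi_{1}}\xi_{2}-\nabla_{\xi_{2}}\xi_{1}$. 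Thus it suffices to compute $g(\nabla_{\xi}\eta,PY)$ for $\xi,\eta\in\Gamma(\mathrm{Rad}\,TM)$.

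First I would record the identity $g(\nabla_{X}\xi,PY)=-g(B_{a}\nabla^{s}_{X}J_{a}\xi,PY)$, valid for all $X,Y\in\Gamma(TM)$ and $\xi\in\Gamma(\mathrm{Rad}\,TM)$. This follows immediately from the two relations already in hand: equation (\ref{u13}) gives $g(A^{*}_{\xi}X,PY)=g(B_{a}\nabla^{s}_{X}J_{a}\xi,PY)$, while projecting (\ref{mas15}) onto $S(TM)$ and using that $\nabla^{*t}_{X}\xi\in\Gamma(\mathrm{Rad}\,TM)$ is $g$-orthogonal to $PY$ yields $g(\nabla_{X}\xi,PY)=-g(A^{*}_{\xi}X,PY)$; combining the two gives the displayed identity. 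If one prefers a self-contained derivation, the same identity drops out of (\ref{u10})--(\ref{u11}) after observing that the terms $J_{a}A_{J_{a}\xi}X$ and $J_{a}D^{l}(X,J_{a}\xi)$ both land in $S(TM^{\perp})$ (because $J_{a}TM\subset S(TM^{\perp})$ and $J_{a}l\mathrm{tr}(TM)\subset S(TM^{\perp})$) and hence are $g$-orthogonal to $PY$, leaving only the tangential $B_{a}$-part of $J_{a}\nabla^{s}_{X}J_{a}\xi$.

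With this identity, I would set $X=\xi_{1}$ and then $X=\xi_{2}$, subtract, and use the torsion-free expression for the bracket to obtain
\begin{align}\nonumber
g([\xi_{1},\xi_{2}],PY)=g\big(B_{a}\nabla^{s}_{\xi_{2}}J_{a}\xi_{1}-B_{a}\nabla^{s}_{\xi_{1}}J_{a}\xi_{2},\,PY\big),
\end{align}
for all $Y\in\Gamma(TM)$. Since $S(TM)$ is non-degenerate, the right-hand side vanishes for every such $Y$ exactly when $B_{a}\nabla^{s}_{\xi_{1}}J_{a}\xi_{2}=B_{a}\nabla^{s}_{\xi_{2}}J_{a}\xi_{1}$, which is the asserted criterion. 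I expect no genuine obstacle, as the argument mirrors the radical-integrability theorem proved earlier for screen real submanifolds; the one point to handle carefully is that the computation only controls the $S(TM)$-component of the difference $B_{a}\nabla^{s}_{\xi_{2}}J_{a}\xi_{1}-B_{a}\nabla^{s}_{\xi_{1}}J_{a}\xi_{2}$, so the stated equality must be read as equality of these tangential fields modulo $\mathrm{Rad}\,TM$ (exactly as in the earlier theorem), the passage being justified by the non-degeneracy of $S(TM)$.
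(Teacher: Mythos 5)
Your proposal is correct and takes essentially the same route as the paper: both reduce integrability of $\mathrm{Rad}\,TM$ to the vanishing of $g([\xi_{1},\xi_{2}],PY)$ for all $Y$, extract the identity $g(\nabla_{X}\xi,PY)=-g(B_{a}\nabla^{s}_{X}J_{a}\xi,PY)$ from (\ref{u11}) (equivalently from (\ref{u13}) combined with (\ref{mas15})), and antisymmetrize in $\xi_{1},\xi_{2}$ using torsion-freeness. Your closing caveat --- that the argument only controls the $S(TM)$-component, so the stated equality should be read modulo $\mathrm{Rad}\,TM$ --- is a fair precision that the paper's proof glosses over.
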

\begin{proof}
	From (\ref{u11}), we have
	\begin{align}\label{u15}
		g(\nabla_{X}\xi,PY)=-\overline{g}(J_{a}\nabla_{X}^{s}J_{a}\xi,PY),
	\end{align}
	for all $X,Y\in \Gamma(TM)$. Then, from (\ref{u15}) and (\ref{u12}) we get
	\begin{align}\nonumber
		g([\xi_{1},\xi_{2}],PY)=\overline{g}(B_{a}\nabla_{\xi_{2}}^{s}J_{a}\xi_{1}-B_{a}\nabla_{\xi_{1}}^{s}J_{a}\xi_{2},PY),	
	\end{align}
	form which our result follows, and the proof is completed.
\end{proof}
\begin{theorem}
	The screen distribution, $S(TM)$, of a screen transversal anti-invariant null submanifold of a normal indefinite complex contact manifold is integrable if and only if $\nabla_{X}^{s}J_{a}Y-\nabla_{Y}^{s}J_{a}X$ has no components in $J_{a}\mathrm{Rad}\,TM$, for all $X,Y\in \Gamma(S(TM))$.
\end{theorem}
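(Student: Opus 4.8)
The plan is to reduce integrability of $S(TM)$ to a single orthogonality condition and then convert that condition, via the covariant-derivative formulas (\ref{m1})--(\ref{e21}), into the stated $\nabla^s$-statement.

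First I would observe that since $\overline{\nabla}$ is torsion-free, $[X,Y]=\overline{\nabla}_XY-\overline{\nabla}_YX$, and for $X,Y\in\Gamma(S(TM))$ the bracket lies in $\Gamma(TM)=\Gamma(\mathrm{Rad}\,TM\perp S(TM))$. Hence $S(TM)$ is integrable if and only if $[X,Y]$ has no radical component, which by $\overline{g}(\xi_i,N_j)=\delta_{ij}$ is equivalent to $\overline{g}([X,Y],N)=0$ for every $N\in\Gamma(l\mathrm{tr}(TM))$. I would also record the dual fact that, because $\{J_a\mathrm{Rad}\,TM\oplus J_al\mathrm{tr}(TM)\}$ is a nondegenerate pairing block inside $S(TM^{\perp})$ with $J_a\mathrm{Rad}\,TM$ orthogonal to $J_aS(TM)\perp D_0$ and isotropic, pairing a section of $S(TM^{\perp})$ against $J_aN$ detects exactly its $J_a\mathrm{Rad}\,TM$-component; thus the statement ``$\nabla^s_XJ_aY-\nabla^s_YJ_aX$ has no component in $J_a\mathrm{Rad}\,TM$'' is equivalent to $\overline{g}(\nabla^s_XJ_aY-\nabla^s_YJ_aX,J_aN)=0$ for all $N$.

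The core of the argument is to rewrite $\overline{g}(\overline{\nabla}_XY,N)$ through $J_a$. Since $u$ and $v$ vanish on $TM$ and $u(N)=v(N)=0$ (as $\mathcal{V}\subset S(TM^{\perp})$, which is orthogonal to $l\mathrm{tr}(TM)$), the Hermitian property of $\overline{g}$ together with (\ref{cm1})--(\ref{cm2}) gives $\overline{g}(J_aZ,J_aN)=\overline{g}(Z,N)$ for every $Z\in\Gamma(T\overline{M})$, so that $\overline{g}(\overline{\nabla}_XY,N)=\overline{g}(J_a\overline{\nabla}_XY,J_aN)$. Writing $J_a\overline{\nabla}_XY=\overline{\nabla}_X(J_aY)-(\overline{\nabla}_XJ_a)Y$ and using (\ref{m1})--(\ref{e21}) with $u(X)=v(X)=u(Y)=v(Y)=0$, together with the annihilation identities of Lemma \ref{lemma11} and the anti-invariance $J_aS(TM)\subset S(TM^{\perp})$ (orthogonal to $l\mathrm{tr}(TM)$) to kill the surviving cross terms (e.g.\ $\overline{g}(HY,GN)=\overline{g}(JY,N)=0$), I expect to obtain the key identity $\overline{g}((\overline{\nabla}_XJ_a)Y,J_aN)=0$ for all $X,Y\in\Gamma(S(TM))$, $N\in\Gamma(l\mathrm{tr}(TM))$ and $a=1,2,3$.

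Then $\overline{g}(\overline{\nabla}_XY,N)=\overline{g}(\overline{\nabla}_X(J_aY),J_aN)$, and since $J_aY\in\Gamma(S(TM^{\perp}))$ I would apply the Weingarten formula (\ref{mas12}); the shape-operator term $A_{J_aY}X\in\Gamma(TM)$ and the term $D^l(X,J_aY)\in\Gamma(l\mathrm{tr}(TM))$ are both orthogonal to $J_aN\in\Gamma(S(TM^{\perp}))$ by (\ref{eq04}) and (\ref{eq08}), leaving $\overline{g}(\overline{\nabla}_XY,N)=\overline{g}(\nabla^s_XJ_aY,J_aN)$. Antisymmetrizing in $X,Y$ yields $\overline{g}([X,Y],N)=\overline{g}(\nabla^s_XJ_aY-\nabla^s_YJ_aX,J_aN)$, and combining with the two equivalences from the first step finishes the proof. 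The main obstacle I anticipate is the careful $a=1,2,3$ bookkeeping needed to establish $\overline{g}((\overline{\nabla}_XJ_a)Y,J_aN)=0$: each of (\ref{m1})--(\ref{e21}) leaves a few terms that are not manifestly zero, and these must be eliminated using Lemma \ref{lemma11} and the anti-invariance hypothesis rather than by the vanishing of $u,v$ alone.
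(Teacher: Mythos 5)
Your proposal is correct and follows exactly the route the paper intends: the paper's own ``proof'' is just the one-line remark that the result follows from (\ref{m1})--(\ref{e21}), (\ref{mas10}) and (\ref{mas12}), and your argument fills in precisely those steps (killing the $(\overline{\nabla}_XJ_a)Y$ terms via the vanishing of $u,v$ on $TM$ and the anti-invariance, then using the Weingarten formula to land on $\overline{g}([X,Y],N)=\overline{g}(\nabla^s_XJ_aY-\nabla^s_YJ_aX,J_aN)$). The detection arguments at both ends --- radical components via $l\mathrm{tr}(TM)$ and $J_a\mathrm{Rad}\,TM$-components via $J_al\mathrm{tr}(TM)$ --- are also handled correctly.
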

\begin{proof}
	A proof follows  from (\ref{m1})-(\ref{e21}), (\ref{mas10}) and (\ref{mas12}).
\end{proof}



\begin{thebibliography}{xxx}

\bibitem{bla1} D. E. Blair,  Riemannian Geometry of Contact and Symplectic Manifolds, Second Edition, Birkhäuser, Boston, 2010.
\bibitem{bla2} D. E. Blair, {\it A Survey of Riemannian Contact Geometry}, Complex Manifolds 2019; 6:31-64.
\bibitem{db}  K. L. Duggal and A. Bejancu,  Lightlike submanifolds of semi-Riemannian  manifolds and applications, Kluwer Academic Publishers, 1996.
\bibitem{ds2} K. L. Duggal and B. Sahin, Differential geometry of lightlike submanifolds. Frontiers in Mathematics, Birkh\"auser Verlag, Basel, 2010.
\bibitem{ssekajja} S. Ssekajja, {\it On the geometry of null real hypersurfaces of an indefinite complex contact manifold}, Bulletin of the Iranian Mathematical Society, 2021, {\bf 47(6)}, p.1871-1891.

\bibitem{isa} S. Ishihara and M. Konishi, {\it Real contact 3-structure and complex contact structure}, Southeast Asian Bulletin of Math., {\bf 3}, (1979) 151-161.
\bibitem{kup} D. N. Kupeli, Singular semi-Riemannian geometry, Mathematics and Its Applications, Vol. 366, Kluwer Academic Publishers, 1996.
\bibitem{ku} Y. Y. Kuo, {\it On almost contact 3-structure}. Tohoku Math. J., {\bf 22}, (1970), 325--332.
\bibitem{sahin} C. Yildirim and B. Sahin, {\it Screen transversal lightlike submanifolds of indefinite  Sasakian manifolds}, An. St.  Univ. Ovidius Constanta, Vol. {\bf 18(2)}, (2010), 315-336.
\end{thebibliography}
\end{document}